\def\aa#1{ \begin{align*} #1 \end{align*} }
\def\aaa#1{ \begin{align} #1 \end{align} }
\def\mm#1{ \begin{multline*} #1 \end{multline*} }
\def\mmm#1{ \begin{multline} #1 \end{multline} }
\def\begeq{\begin{equation} \begin{cases}} 
\def\endeq{ \end{cases} \end{equation}}
\def\eq#1{ \begeq #1 \endeq }
\def\bege{\begin{equation*} \begin{cases}} 
\def\ende{ \end{cases} \end{equation*}}
\def\eqq#1{ \bege #1 \ende}
\def\eqn#1{ \begin{equation} \begin{aligned} #1 \end{aligned}\end{equation}}
\newtheorem{thm}{\sc Theorem}
\newtheorem{lem}{\sc Lemma}
\newtheorem{cor}{\sc Corollary}
\newtheorem{pro}{\sc Proposition}
\newtheorem{rem}{\sc Remark}
\newtheorem*{thm*}{\sc Theorem}
\newcommand{\pl}{\partial}
\newcommand{\gt}{\geqslant}
\newcommand{\lt}{\leqslant}
\newcommand{\te}{\theta}
\newcommand{\sub}{\subset}
\newcommand{\dl}{\delta}
\newcommand{\al}{\alpha}
\newcommand{\gm}{\gamma}
 \newcommand{\Gm}{\Gamma}
 \newcommand{\la}{\lambda}
 \newcommand{\La}{\Lambda}
 \newcommand{\sg}{\sigma}
\newcommand{\dd}{\diagdown}
\newcommand{\om}{\omega}
\newcommand{\mc}{\mathcal}
\newcommand{\Om}{\Omega}
\newcommand{\mb}{\mathbf}
\newcommand{\C}{{\rm C}}
\newcommand{\td}{\tilde}
\newcommand{\E}{\mathbb E}
\renewcommand{\>}{\rangle}
\newcommand{\x}{\times}
\newcommand{\mto}{\mapsto}
\newcommand{\PP}{\mathbb P}
\newcommand{\W}{\mathbb W}
\newcommand{\cov}{{\rm cov}\,}
\newcommand{\Te}{\Theta}
\newcommand{\rf}{\eqref}
\newcommand{\bi}{\begin{itemize}}
\newcommand{\ei}{\end{itemize}}
\newcommand{\biz}{\begin{itemize}[leftmargin=*]}
\newcommand{\D}{{\mathbb D}}
\DeclareMathOperator{\ind}{\mathbbm{1}}
\newcommand{\lap}{\Delta}
\newcommand{\nab}{\nabla}
\newcommand{\lb}{\label}
\newcommand{\fdot}{\,\cdot\,}
\newcommand{\fatdot}{{\raisebox{0.1mm}{\hspace{0.3mm}\tikz\filldraw[black,x=0.4pt,y=0.4pt] (0,0) circle (1);}\hspace{0.2mm}}}
\newcommand{\tet}{\vartheta}
\def\Rnu{{\mathbb R}}
\def\Nnu{{\mathbb N}}
\def\Qnu{{\mathbb Q}}
\def\ffi{\varphi}
\def\com#1{}
\long\def\symbolfootnote[#1]#2{\begingroup%
\def\thefootnote{\fnsymbol{footnote}}\footnote[#1]{#2}\endgroup}
\titleformat{\section}[hang]{\large\bfseries}{\thesection.}{1ex}{}{}
\titleformat{\subsection}[hang]{\normalsize\bfseries}{\thesubsection}{2ex}{}{}
\titleformat{\subsubsection}[hang]{\small\bfseries}{\thesubsubsection}{2ex}{}{}
\begin{document}

\title[Gaussian-type density bounds for solutions to multidimensional BSDEs]
{Gaussian-type density bounds for solutions to multidimensional backward SDEs\\and application to gene expression}

\author{Roman Chertovskih}
\address{Research Center for Systems and Technologies, Universidade do Porto, Portugal}
\email{roman@fe.up.pt}

\author{Evelina Shamarova}
\address{Departamento de Matem\'atica, Universidade Federal da Para\'iba, Jo\~ao Pessoa, Brazil}
\email{evelina@mat.ufpb.br}

\maketitle

\vspace{-1cm}

\begin{abstract}
We obtain upper and lower Gaussian-type bounds on the density of each component $Y^i_t$ of the 
solution $Y_t$ to a multidimensional non-Markovian backward SDE.
 Our approach is based on the Nourdin-Viens formula and 
 a stochastic version of Wazewski's theorem on the positivity of the components
 of a solution to an ODE.
 Furthermore, we apply our results to stochastic gene expression; namely, we estimate
 the density of the law of the
amount of protein generated by a gene in a gene regulatory network. 
\end{abstract}

\vspace{4mm}

{\footnotesize
{\noindent \bf Keywords:} 
Backward SDEs, Wazewski's theorem, Nourdin-Viens' formula, Malliavin Calculus,
Stochastic gene expression

\vspace{1mm}

{\noindent\bf 2020 MSC:} 
60H10, 60H07, 92D99  
}

\section{Introduction}
The aim of this work is to find sufficient conditions to ensure that 
each component $Y^i_t$ of the solution $Y_t$ to  the
$m$-dimensional non-Markovian backward SDE  (BSDE)
 \aaa{
 \lb{bsde11:11}
 Y_t = \xi + \int_t^T g(s,Y_s,Z_s) ds + \int_t^T Z_s dB_s \quad 
 }
 admits a density with respect to Lebesgue measure,
and,  furthermore, to obtain upper and lower Gaussian-type bounds on this density.
In addition, we obtain bounds on tail probabilities. 
In \rf{bsde11:11}, $B_t$ is an $n$-dimensional Brownian motion
 and the generator $g: [0,T]\x \Om \x \Rnu^m \x \Rnu^{m\x n}\to \Rnu^m$ 
 can depend on $\om$. 
 As a corollary, we obtain Gaussian-type bounds on the components $Y^i_t$ of
 the solution $Y_t$ to the BSDE
 \aaa{
\lb{bsde}
Y_t = \ffi(X_T) + \int_t^T g(s,X_s,Y_s,Z_s)ds + \int_t^T Z_s dB_s,
}
where $X_t$ is the solution to the $n$-dimensional SDE
\aaa{
 \lb{fsde1}
 X_t = x + \int_0^t f(s,X_s) ds + \int_0^t \sg(s) dB_s.
 }
In \rf{bsde}-\rf{fsde1}, $g: [0,T] \x \Rnu^n \x \Rnu^m \x \Rnu^{m\x n}\to\Rnu^m$, $f:[0,T]\x \Rnu^n \to\Rnu^n$,
  and $\sg: [0,T] \to \Rnu^{n\x n}$ are deterministic functions.

BSDEs
have numerous applications in stochastic control theory, mathematical finance, and biology
(see, for instance, \cite{Ci, Karo, gene,Yong}).
Several recent papers 
 studied the existence of densities and density estimates for the laws of solutions to one-dimensional 
 BSDEs \cite{aboura, antonelli, TM, TM1}, 
 fully coupled one-dimensional
 forward-backward SDEs (FBSDEs) \cite{OS}, and one-dimensional BSDEs  
 driven by Gaussian processes (in particular, by a fractional Brownian motion) \cite{fanwu}.
 To the best of our knowledge, the 
 problem of obtaining density estimates for BSDEs 
in the multidimensional setting is addressed for the first time.
Furthermore, when density estimates are concerned, 
each of the above-cited papers deals only with deterministic generators.

To obtain upper and lower  Gaussian-type bounds, we use the Nourdin-Viens formula  \cite{NV}. 
In the case of  one-dimensional SDEs  and   BSDEs,  driven by a one-dimensional Brownian motion,
bounds  on the density of the solution, say $U_t$,
can be obtained by means of finding  upper and lower positive deterministic
bounds on the expression $\E \Big[\int_0^t D_s  U_t \, \E[D_s U_t |\mc F_s] \, ds | \, U_t\Big]$ \cite{ng, OS},
 where $\mc F_s$ is the (augmented) natural filtration of a one-dimensional Brownian motion. This expression was first
  introduced in \cite{ng}.
By extending the approach of \cite{ng} to the case of
equations driven by a multidimensional
Brownian motion, we show that we are required to obtain upper 
and lower positive  deterministic bounds on the expression 
\aa{
\E \Big[\int_0^t \sum_{k=1}^nD^k_s Y^i_t \, \E[D^k_s Y^i _t|\mc F_s] \; ds\, \big| \, Y^i_t \Big],
}
where $\mc F_s$ is the (augmented) natural filtration of the $n$-dimensional Brownian motion $B_s$.
Unlike the one-dimensional case, where it is usually sufficient to obtain upper and lower positive deterministic bounds on
the Malliavin derivative of the solution  \cite{aboura, ng, NV, OS}, we aim to obtain a positive lower bound on
 each component $D^k_s Y^i_t$  and an upper bound on $|D_s Y^i_t|$.
While upper bounds can be obtained by standard techniques, 
obtaining component-wise lower bounds turns out to be a more delicate task.
Our result in this direction is
inspired by Wazewski's theorem on the positivity of the solution
components for ODEs \cite{Wazewski}. More specifically, we consider
the system of linear BSDEs
\aaa{
\lb{linear-bsde}
U^i_t = \xi^i + \int_t^T (K^i_s  + \sum_{j=1}^m F^{ij}_s U^j_s +
\sum_{j=1}^n G^{ij}_s V^{ij}_s) ds + \sum_{j=1}^n \int_t^T V^{ij}_s dB^j_s,
}
$i=1,\ldots, m$. Assuming the non-negativity of $\xi^i$, $K^i_t$ and  
$F^{ij}_t$ for $i\ne j$, along with additional standard assumptions on the coefficients,
we obtain the estimate
\aaa{
\lb{u-est-1111}
U^i_t \gt \E_{\Qnu^i}\Big[\xi^i e^{\int_t^T F^{ii}_s ds}
+ \int_t^T   e^{\int_t^s F^{ii}_r dr} K^i_s ds \,\big|\,\mc F_t\Big],
}
where $\Qnu^i$ is a probability measure absolutely continuous with respect
to the original measure $\PP$. Remark 
that the existing explicit formulas for solutions to multidimensional linear BSDEs
\cite{DelbaenTang, HarterRichou} 
do not allow to obtain component-wise lower bounds.

Note that the $i$-th equation in system \rf{linear-bsde} depends only on the $i$-th line
of the matrix $\{V^{ij}_s\}$. This restricts the class of generators $g(t,y,z)$ to those
whose $i$-th component depends only on the $i$-th line of the matrix $z$. 
However, we provide the following example of a two-dimensional 
BSDE driven by a one-dimensional Brownian motion:
\eq{
\lb{exl-1111}
Y^1_t = e^{-T}B_T - \int_t^T Z^2_s ds + \int_t^T Z^1_s dB_s,\\
Y^2_t = 2B_T - \cos B_T  + \int_t^T Z^2_s dB_s.
}
For this BSDE we show that (i) for a large interval of values of $t\in (0,T)$, 
the Malliavin derivative $D_r Y^1_t$ does not take only positive or only negative
values; 
(ii) at some points $t\in (0,T)$,
the density of $Y^1_t$  with respect to Lebesgue measure does 
not possess Gaussian-type bounds. Talking about property (i), 
we would like to emphasize that the strict positivity of the Malliavin derivative
 of a random variable (in particular of an SDE or a BSDE solution)
is a common requirement to obtaining Gaussian-type density bounds
by means of the Nourdin-Viens formula \cite{NV}; see, for instance, \cite{aboura, ng, TM, TM1, NV, OS}.
Example \rf{exl-1111} shows that the above-described class of generators, i.e.,
when $g^i(t,y,z) = g^i(t,y,z^i)$, where $z^i$ is the $i$-th line of the matrix $z$,
 is the most general one for which the results 
on Gaussian-type bounds for one-dimensional BSDEs (e.g. \cite{aboura, TM, OS}) 
can be extended to the multidimensional case.

Obtaining Gaussian-type bounds on the densities for the BSDE \rf{bsde} requires, in particular, lower bounds on the components  $D^k_s X^i_t$   of the Malliavin
drivatives of the components  of the solution to 
the forward SDE \rf{fsde1}.
Since the diffusion coefficient $\sg(t)$ is assumed to be independent of the solution $X_t$, 
 $D^k_s X^i_t$'s 
solve a system of ODEs. To obtain non-negative lower bounds on these components,
we employ Wazewski's theorem \cite{Wazewski}. 
Remark that our restriction on the diffusion coefficient is only due to the multidimensional setting.
 If the forward SDE is one-dimensional, the diffusion coefficient may also depend on $X_t$,
and one can apply the Lamperti transform, as it is described, e.g., in \cite{OS}.

Furthermore, we apply our results to obtain Gaussian-type bounds on the density of  the law of the
protein level of a gene which is a part of a gene regulatory network.
To model stochastic gene expression,
we employ the backward SDE approach developed in \cite{gene}.
Our results apply to a network consisting of more than one gene, 
and therefore, stochastic gene expression
is modeled by a multidimensional BSDE. 
In addition, we obtain upper
and lower bounds on tail probabilities which allows to compute prediction intervals for 
expression of individual genes. 
More specifically, in gene expression models, one usually deals
with an ensemble of cells, where each cell contains a gene that expresses a certain protein.
From this point of view, the protein amount of each gene at time $t$ becomes a random variable.
 Thus, one wants 
to know the interval in which protein amounts of identical genes will fall if we measure them 
from different cells.
Also, using tail probabilities, one can prove that 
protein amounts
remain positive by showing that the probabilities that they are non-positive are negligibly small. 
Remark that it is not possible to prove that the amounts of proteins are always positive since
this would mean that their densities do not possess Gaussian-type bounds.

For one self-regulating gene, whose expression was also modeled
by the BSDE method \cite{gene},
the problem of existence of a density and Gaussian-type bounds on this density
 was studied in \cite{TM1}.
In addition, in \cite{TM1}, a numerical simulation was performed so one could observe that the density estimates agree
with the data produced by the BSDE method. However, the approach used \textit{ibid.}
is essentially one-dimensional, and, therefore, can only be applied to a self-regulating gene.
Our approach allows to deal with gene regulatory networks. It can also be applied
to a self-regulating gene; however, it is not the goal of the present work.

Finally, we performed a numerical simulation
for a particular type of a gene regulatory network with the purpose 
 to demonstrate that our density estimates agree with the benchmark data 
generated by Gillespie's algorithm \cite{Gil}.
 The latter fact, as a byproduct, can be regarded as another evidence 
of the validity of the BSDE approach \cite{gene} as a tool to model stochastic gene expression.

The organization of our paper is as follows. In Section 2, we give
some necessary preliminary results. In Section 3, we obtain a ``stochastic version'' of Wazewski's theorem;
namely,
we provide sufficient conditions for the non-negativity of the solution components 
for linear BSDEs of type \rf{linear-bsde} and obtain the lower bound \rf{u-est-1111}.
In Section 4, we obtain Gaussian-type bounds on the density of each component $Y^i_t$ of the solution  $Y_t$
to both multidimensional BSDEs, \rf{bsde11:11} and \rf{bsde}. In the same section we demonstrate
that if the last argument of $g^i(t,y,z)$ contains not only the $i$-th line of the matrix $z$, then
the density of $Y^i_t$ may not have Gaussian-type bounds at some points $t$.
 Section 5 is dedicated to applications of the theoretical results  to gene expression. 
 Namely,
 we apply the results of Section 4 to obtain Gaussian-type bounds
on the density of the law of the amount of protein generated by a gene in a gene regulatory network.
Furthermore, using tail probabilities, we compute, 
theoretically and numerically, prediction intervals for expressions
of individual genes. Finally,
in the same section, we describe results of a numerical simulation which show
that the data obtained by Gillespie's algorithm fit between the theoretically determined density bound
curves.

\section{Preliminaries}

\subsection{Malliavin derivative}

Here we describe some elements of the Malliavin calculus  that we  need in the paper.  
We refer to \cite{nualart} for a more complete exposition. 

Let $\mc H$ be a real separable Hilbert space and $\W (h)$, $h \in \mc H$,  be an isonormal Gaussian process on a probability space 
$(\Om, \mc F, \PP)$, which is a centered Gaussian family of random variables with the property
$\E \big( \W(h_1)\W(h_2) \big) = \<h_1, h_2\>_{\mc H}$.

We denote by $D$  the Malliavin  derivative operator. It is known that (see, e.g., \cite{nualart}) if $F$ is a smooth random variable of the form $F=g(\W(h _1), \ldots , \W(h_k))$, where $g$ is a smooth compactly supported function and $h_i \in \mc H$, $i=1,...,k$, then
\aa{
DF=\sum_{i=1}^{k} \pl_{x_i} g(\W(h_1), \ldots , \W(h_k))h_i.
}
It can be shown that the operator $D$ is closable from the space $\mc S$ of smooth random variables of the above form
 to $ L^2(\Om, \mc H)$ and can be extended to the space $\mathbb{D}^{1,p}$ which is the 
closure of $\mc S$ with respect to the norm
\begin{equation*}
\Vert F\Vert _{1,p}^{p} = \mathbb{E} |F|^{p} + \mathbb{E} \Vert DF\Vert _{\mathcal{H}}^{p}. 
\end{equation*}
In our paper, $\mc H= L_2([0,\infty),\Rnu^n)$ and $\W(h) = \sum_{i=1}^n\int_0^\infty h^i(t) dB^i_t$, where $B^i_t$ are independent real-valued 
standard Brownian motions.

Furthermore, $\dl$ denotes the Skorokhod integral, and  $L=-D\dl$ denotes the Ornstein-Uhlenbeck operator.

\subsection{Formula for the density}
Let $(\Om, \mc F, \PP)$ be a probability space.
Following \cite{NV}, we define 
\aa{
g_F(x) = \E\big[ \big(D_rF,-D_rL^{-1} F\big)_{\mc H}| \, F=x \big], \quad x\in\Rnu,
}
and
recall the criterium for the existence of the density of a real-valued  zero-mean random variable $F$ 
and the explicit formula for this density
\cite{NV}.
\begin{pro}
\lb{pro11}
Let $F\in  \D^{1,2}$, $\E F = 0$. Then, the law of $F$ has a density $\rho_F$ with respect to Lebesgue measure
if and only if 
$g_F(F)>0$. In this case, ${\rm supp\,} (\rho_F)$ is a closed interval in $\Rnu$ containing $0$
and for almost all $x\in {\rm supp\,} (\rho_F)$
\aa{
\rho_F(x) = \frac{\E|F|}{2g_F(x)} \exp \Big( -\int_0^x \frac{y}{g_F(y)} dy\Big).
}
\end{pro}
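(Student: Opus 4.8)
The plan is to reconstruct the proof of \cite{NV}. The backbone is the Malliavin integration-by-parts identity. Since $\E F=0$, the operator $L^{-1}$ is well defined on $F$ and $LL^{-1}F=F$; because $L=-\dl D$, this gives the representation $F=\dl(-DL^{-1}F)$. Consequently, for every $\phi\in C^1(\Rnu)$ with bounded derivative, the duality between $\dl$ and $D$ together with the chain rule $D\phi(F)=\phi'(F)\,DF$ yield
\aa{
\E[F\phi(F)]
&=\E\big[\big(-DL^{-1}F,\;\phi'(F)\,DF\big)_{\mc H}\big]
=\E\big[\phi'(F)\,(DF,-DL^{-1}F)_{\mc H}\big]\\
&=\E\big[\phi'(F)\,g_F(F)\big],
}
the last equality by conditioning on $F$. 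The first task would be to make this rigorous for the class of $\phi$ used below, approximating from $C_c^\infty(\Rnu)$ and using that $F\in\D^{1,2}$ with $\E F=0$ ensures $DL^{-1}F\in L^2(\Om;\mc H)$.

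Next I would introduce the function $\psi(x):=\E\big[F\,\mathbbm 1_{\{F>x\}}\big]$, $x\in\Rnu$, and record its elementary properties, all immediate from $\E F=0$: $0\lt\psi\lt\E|F|$, $\psi(0)=\tfrac12\E|F|$, $\psi(x)\to0$ as $x\to\pm\infty$, $\psi$ is nondecreasing on $(-\infty,0]$ and nonincreasing on $[0,\infty)$ (so $\{\psi>0\}$ is an interval containing $0$ as soon as $\E|F|>0$), and the distributional derivative of $\psi$ is the signed measure $-x\,\mu(dx)$, where $\mu$ is the law of $F$. Feeding into the identity of the previous step the functions $\phi(x)=\int_{-\infty}^x\eta(t)\,dt$ with $\eta\in C_c(\Rnu)$, and comparing with the elementary identity $\E[F\phi(F)]=\int_\Rnu\phi'(x)\,\psi(x)\,dx$ (a Fubini argument), one obtains $\int_\Rnu\eta\,\psi\,dx=\int_\Rnu\eta\,g_F\,d\mu$ for every $\eta\in C_c(\Rnu)$, i.e. the two Radon measures coincide:
\aa{
g_F(x)\,\mu(dx)=\psi(x)\,dx\qquad\text{on }\Rnu.
}
Since $\psi\gt0$, this already forces $g_F\gt0$ $\mu$-a.e.

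The equivalence then drops out of this measure identity. If $g_F(F)>0$ a.s., then for every Lebesgue-null set $A$ one has $\int_A g_F\,d\mu=\int_A\psi\,dx=0$ with $g_F>0$ $\mu$-a.e., hence $\mu(A)=0$; so $\mu$ has a density $\rho_F$, and comparing densities in the identity above gives $g_F\,\rho_F=\psi$ a.e. and $\rho_F=\psi/g_F$ where $g_F>0$. Conversely, if $\mu$ has a density $\rho_F$, then $\mu$ is non-atomic, so $\psi$ is continuous and one checks that $\{\psi>0\}=(\essinf F,\esssup F)=:(a,b)$, an open interval containing $0$; from $g_F\rho_F=\psi$ a.e. one reads off that $\rho_F>0$ a.e. on $(a,b)$ and $\rho_F=0$ a.e. outside, so $\mathrm{supp}\,\rho_F=[a,b]$ is a closed interval containing $0$, while on $(a,b)$ one has $g_F=\psi/\rho_F>0$ a.e., i.e. $g_F(F)>0$ a.s. This settles both implications and the description of the support.

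Finally, on the open interval $(a,b)$ the distributional relation becomes the genuine identity $\psi'(x)=-x\,\rho_F(x)$ a.e.; combined with $\psi=g_F\rho_F$ it gives the separable ODE $(\log\psi)'(x)=\psi'(x)/\psi(x)=-x/g_F(x)$. Integrating from $0$ and using $\psi(0)=\tfrac12\E|F|$ yields $\psi(x)=\tfrac12\E|F|\,\exp\!\big(-\int_0^x\tfrac{y}{g_F(y)}\,dy\big)$, and dividing by $g_F(x)$ gives the claimed formula for $\rho_F(x)$ at a.e. $x\in\mathrm{supp}\,\rho_F$. I expect the only genuinely delicate points to be the rigorous justification of the integration-by-parts identity for the relevant test functions and the measure identification of the second step (the Fubini argument together with the boundedness and continuity properties of $\psi$); once these are in place, the remaining steps are elementary.
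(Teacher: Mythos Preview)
The paper does not actually prove this proposition: it is stated in the Preliminaries section as a known result, with an explicit citation to \cite{NV}, and no proof is given in the paper itself. Your reconstruction is correct and follows precisely the original Nourdin--Viens argument (the integration-by-parts identity $\E[F\phi(F)]=\E[\phi'(F)g_F(F)]$, the auxiliary function $\psi(x)=\E[F\mathbbm 1_{\{F>x\}}]$, the measure identity $g_F\,\mu(dx)=\psi(x)\,dx$, and the resulting ODE for $\psi$), so there is nothing to compare against beyond noting that you have reproduced the proof the paper merely cites.
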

Further, in \cite{ng}, the authors showed that if $\mc H = L_2([0,\infty),\Rnu)$ 
and the associated isonormal Gaussian process is $\W(h) = \int_0^\infty h(t) dB_t$, where $B_t$ is a
one-dimensional Brownian motion, then, a.s., 
\aaa{
\lb{gf}
g_F(F) = \ffi_F(F), 
}
where 
$\ffi_F(x) =  \E \Big[\int_0^\infty D_r F \, \E[D_r F |\mc F_r] \, dr | \, F=x \Big]$,  $x\in\Rnu$,
and $\mc F_t$ is the 
filtration generated by the Brownian motion $B_t$, $t\in \Rnu_+$,  and augmented with $\PP$-null sets.

\section{Positivity of the components of solutions to linear BSDEs}
The result of this section is inspired by Wazewski's theorem \cite{Wazewski},
and can be regarded as a generalization of the latter to linear backward SDEs.

We start by announcing the Wazewski theorem.
Consider the Cauchy problem for a system of linear ordinary differential equations
\aaa{
\lb{ode}
u'(t) = A(t)u(t), \qquad u(0) = u_0, \quad t\in [0,\infty),
}
where $A(t)$ is an $n\x n$ matrix. 
Below, $u^i(t)$ and $u^i_0$ denote the $i$-th coordinates of the vectors $u(t)$ and $u_0$, respectively.
The theorem reads:
\begin{pro}
\lb{pro14}
Let $a_{ij}: [0,\infty) \to \Rnu$ be continuous functions, $i,j=1,\ldots,n$, and 
$u(t)$ be the solution to problem \rf{ode}. Further let $u^i_0 \gt 0$ for all $i=1,\ldots,n$. 
Then, the following two conditions are equivalent:
\bi
\item[1)] for all  $i,j=1,\ldots,n$ such that $i\ne j$, $a_{ij}(t) \gt 0$ on $(0,\infty)$; 
\item[2)] $u^i(t)\gt 0$ for all $t\in (0,\infty)$ and $i=1,\ldots,n$.
\ei
\end{pro}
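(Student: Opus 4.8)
The plan is to prove the two implications separately, putting most of the work into $1)\Rightarrow 2)$, which is the half used later (for instance, to obtain non-negative lower bounds on the components $D^k_s X^i_t$ of the Malliavin derivative of the solution to the forward SDE \rf{fsde1}). For this direction I would first neutralise the possibly negative diagonal of $A(t)$ by an exponential change of variable. Fix $T>0$ and set $c:=\sup\{|a_{ii}(t)| : t\in[0,T],\ i=1,\ldots,n\}$, finite since the $a_{ii}$ are continuous; then $v(t):=e^{ct}u(t)$ solves $v'(t)=B(t)v(t)$, $v(0)=u_0$, with $B(t):=A(t)+cI$ ($I$ the identity matrix). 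Every entry of $B(t)$ is then non-negative on $[0,T]$: the off-diagonal entries (for $i\ne j$) coincide with $a_{ij}(t)$, which by $1)$ satisfy $a_{ij}(t)\gt 0$ on $(0,\infty)$ and hence, by continuity, on $[0,\infty)$; the diagonal entries equal $a_{ii}(t)+c$, which is $\gt 0$ by the choice of $c$. Since $e^{ct}>0$, it suffices to show that $v(t)$ has non-negative components on $[0,T]$ and then let $T\to\infty$.

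To this end I would pass to the integral form $v(t)=u_0+\int_0^t B(s)v(s)\,ds$ and run the Picard iteration $v_0\equiv u_0$, $v_{k+1}(t):=u_0+\int_0^t B(s)v_k(s)\,ds$. An induction then shows that every $v_k(t)$ has non-negative components on $[0,T]$: this holds for $v_0$ since $u_0$ has non-negative components, and if $v_k(s)$ has non-negative components then so does $B(s)v_k(s)$ --- a matrix with non-negative entries applied to a vector with non-negative components --- hence so do $\int_0^t B(s)v_k(s)\,ds$ and $v_{k+1}(t)$. Since the $v_k$ converge to $v$ uniformly on $[0,T]$ and the non-negative orthant is closed, $v(t)$ has non-negative components, as required. (Equivalently, the same induction applied to the matrix solution $\Phi$ of $\Phi'(t)=B(t)\Phi(t)$, $\Phi(0)=I$, gives $\Phi(t)$ with non-negative entries, whence $v=\Phi u_0$ has non-negative components.)

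For the converse $2)\Rightarrow 1)$ I would argue by contraposition. Suppose $a_{i_0j_0}(t_0)<0$ for some $i_0\ne j_0$ and some $t_0\in(0,\infty)$; by continuity $a_{i_0j_0}<0$ near $t_0$. Consider the solution $w$ of $w'(t)=A(t)w(t)$ with $w(t_0)=e_{j_0}$, the $j_0$-th standard unit vector, which has non-negative components. Then $w^{i_0}(t_0)=0$, while $\frac{d}{dt}w^{i_0}(t)\big|_{t=t_0}=\sum_{k=1}^n a_{i_0k}(t_0)\,w^k(t_0)=a_{i_0j_0}(t_0)<0$, so $w^{i_0}(t)<0$ for $t$ slightly larger than $t_0$. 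I expect the genuine difficulty to lie precisely here rather than in $1)\Rightarrow 2)$: to turn this boundary computation into a bona fide contradiction one must use $2)$ in the form underlying Wazewski's theorem \cite{Wazewski} --- invariance of the non-negative orthant under the flow of \rf{ode} started at \emph{every} initial time $s\in[0,\infty)$, equivalently a pointwise-in-$t$ sign condition on the off-diagonal entries of $A(t)$ --- rather than as a statement about the single solution issued from $u_0$ at time $0$; making this reduction precise is the step that needs care, whereas $1)\Rightarrow 2)$ is essentially routine once the exponential shift is in place.
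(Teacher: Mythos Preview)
The paper does not prove this proposition at all: it is stated as Wazewski's theorem with a citation to \cite{Wazewski}, and the paper moves directly to Corollary~\ref{pro24}. So there is no ``paper's own proof'' to compare against; you are supplying a proof where the paper gives none.

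Your argument for $1)\Rightarrow 2)$ is correct and standard: the exponential shift $v(t)=e^{ct}u(t)$ to make $B(t)=A(t)+cI$ entrywise non-negative, followed by Picard iteration (or the fundamental matrix argument), is a clean way to establish forward invariance of the non-negative orthant. This is in fact the only direction the paper ever uses (in Corollary~\ref{pro24} and later for bounding $D^k_rX^j_t$).

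For $2)\Rightarrow 1)$ you have put your finger on a genuine issue with the proposition as written: read literally, condition~2) concerns a \emph{single} solution with a \emph{fixed} initial datum $u_0$, and that cannot be equivalent to~1) --- take $u_0=0$, for which $u\equiv 0$ satisfies~2) regardless of $A$. Wazewski's actual theorem asserts the equivalence between~1) and forward invariance of the non-negative orthant, i.e., 2) for \emph{every} admissible $u_0$ (and every initial time). Your contraposition argument with $w(t_0)=e_{j_0}$ is exactly right once~2) is interpreted in this quantified form, and your concluding paragraph identifies the needed reformulation precisely. Since the paper only uses $1)\Rightarrow 2)$, this imprecision in the statement is harmless for its purposes, but it is worth flagging.
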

  \begin{cor} 
  \lb{pro24}
  Let the assumptions of Proposition \ref{pro14} be fulfilled and let condition 1) of the same proposition
  be in force.
  Then, for all $i = 0,\ldots, n$ and for all $t\in (0,\infty)$
  \aaa{
  \lb{est-1}
  u^i(t) \gt u^i_0 \exp\big\{\int_0^t a_{ii}(s) ds\big\}.
  }
  \end{cor}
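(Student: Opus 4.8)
The plan is to reduce the vector inequality to a scalar differential inequality for each fixed coordinate $i$ and then integrate it with the appropriate integrating factor. Writing out the $i$-th line of the system \rf{ode}, we have
\aa{
(u^i)'(t) = a_{ii}(t) u^i(t) + \sum_{j\ne i} a_{ij}(t) u^j(t), \qquad t\in[0,\infty).
}
Since we are assuming condition 1) of Proposition \ref{pro14}, we may invoke the implication $1)\Rightarrow 2)$ of that proposition to conclude that $u^j(t)\gt 0$ for all $j$ and all $t\in(0,\infty)$; combined with $a_{ij}(t)\gt 0$ for $i\ne j$, this makes the sum $\sum_{j\ne i} a_{ij}(t) u^j(t)$ nonnegative on $(0,\infty)$. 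Hence $(u^i)'(t) \gt a_{ii}(t) u^i(t)$ there.

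Next I would introduce the auxiliary function $v(t) = u^i(t)\exp\big\{-\int_0^t a_{ii}(s)\,ds\big\}$, which is $C^1$ on $[0,\infty)$ because $u$ solves \rf{ode} and the $a_{ii}$ are continuous. Differentiating,
\aa{
v'(t) = \big((u^i)'(t) - a_{ii}(t) u^i(t)\big)\exp\big\{-\textstyle\int_0^t a_{ii}(s)\,ds\big\} \gt 0 \quad\text{for } t\in(0,\infty),
}
so $v$ is nondecreasing on $(0,\infty)$, and by continuity also on $[0,\infty)$. Therefore $v(t)\gt v(0) = u^i_0$ for every $t\in(0,\infty)$, which is precisely \rf{est-1} after multiplying through by $\exp\big\{\int_0^t a_{ii}(s)\,ds\big\}$.

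I do not expect any serious obstacle here: the argument is a one-line Gr\"onwall-type comparison once Wazewski's theorem has been applied to supply the sign of the off-diagonal terms. The only mild point of care is that Proposition \ref{pro14} guarantees positivity of the components only on the open interval $(0,\infty)$, so the differential inequality for $v'$ holds a priori only there; this is harmless since $v$ is continuous on the closed half-line, so monotonicity extends to $t=0$ and the bound at $t=0$ is an equality.
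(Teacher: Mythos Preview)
Your proof is correct and follows essentially the same approach as the paper: both apply Proposition~\ref{pro14} to ensure $\sum_{j\ne i} a_{ij}(t) u^j(t)\gt 0$, deduce the scalar differential inequality $(u^i)'(t)\gt a_{ii}(t)u^i(t)$, and then integrate. The only cosmetic difference is that the paper invokes the differential form of Gronwall's inequality by name, whereas you write out the integrating-factor argument explicitly.
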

  \begin{proof}
  For the $i$-th coordinate of the solution $u$ we have
  \aa{
  \frac{d u^i(t)}{dt} =  \sum_{j=1}^n a_{ij}(s) u^j(t) \gt   a_{ii}(t) u^i(t). 
  }
  Indeed, Proposition \ref{pro14} implies that $\sum_{j\ne i} a_{ij}(t) u^j(t)  \gt 0$. Inequality \rf{est-1} follows
  now from (the differential from of) Gronwall's inequality.
  \end{proof}
  Note that the backward form of inequality \rf{est-1} is 
  \aa{
  v^i(t) \gt v^i(T) \exp\big\{\int_t^T a_{ii}(T-s) ds\big\}, \quad v^i(t) = u^i(T-t).
  }
  We now obtain an analog of Corollary \ref{pro24}, and in particular of the above inequality,
for systems of linear BSDEs of the form \rf{linear-bsde}. Let
$(\Om, \mc F, \mc F_t, \PP)$ be a filtered probability space, where $\mc F_t$
 is the natural filtration of  an $n$-dimensional Brownian motion 
 $B_t$ augmented with $\PP$-null sets. 
Let $U_t$ denote the vector $(U^1_t,\ldots, U^m_t)$ and $V_t$ denote the $\Rnu^{m\x n}$-matrix
$\{V^{ij}_t\}_{i=1, j=1}^{m\quad n}$.
\begin{thm}
\lb{t1111}
For the coefficients of  equation \rf{linear-bsde} we assume
\bi
\item[(i)] $F^{ij}_t$, $G^{ij}_t$, $K^i_t$,  $i=1,\ldots, m$, $j=1,\ldots, n$
 are $\mc F_t$-predictable; $\xi^i\in L_2(\Om)$ is $\mc F_T$-measurable;
 $K^i_t\in L_2(\Om\x [0,T])$.
  \item[(ii)] $\xi^i, K^i_t \gt 0$ and $F^{ij}_t\gt 0$ a.s. for $i\ne j$.
\item[(iii)] $F^{ij}_t$ and $G^{ij}_t$ are bounded in $(t,\om)$ a.s.
for all $i=1,\ldots, m$, $j=1,\ldots, n$.
\ei
Then, for each $i=1,\ldots, m$, on $(\Om,\mc F, \mc F_t)$
there exists a probability measure $\Qnu^i$, absolutely continuous
with respect to $\PP$, such that for each component $U^i_t$  of the 
solution to the BSDE \rf{linear-bsde}, estimate \rf{u-est-1111} holds $\PP$-a.s.
\end{thm}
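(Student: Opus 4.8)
The idea is to reproduce, coordinate by coordinate, the integrating‑factor argument behind the deterministic bound \rf{est-1}. Fix $i\in\{1,\dots,m\}$, isolate the diagonal term $F^{ii}_tU^i_t$ in the $i$-th equation of \rf{linear-bsde}, remove the term $\sum_{j=1}^nG^{ij}_tV^{ij}_t$ by a change of measure, and then solve the resulting \emph{scalar} linear BSDE in closed form. Concretely, set $\frac{d\Qnu^i}{d\PP}=\mathcal E\big(-\sum_{j=1}^n\int_0^{\fdot}G^{ij}_s\,dB^j_s\big)_T$; by assumption (iii) the integrand is bounded, so Novikov's condition is trivially satisfied, the Dol\'eans exponential is a true martingale, and $\Qnu^i$ is a probability measure equivalent to (in particular absolutely continuous with respect to) $\PP$, with $d\Qnu^i/d\PP$ having finite moments of all orders. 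By Girsanov's theorem the processes $\tilde B^j_t=B^j_t+\int_0^tG^{ij}_s\,ds$ form an $n$-dimensional $\Qnu^i$-Brownian motion; substituting $dB^j_s=d\tilde B^j_s-G^{ij}_s\,ds$ into the $i$-th equation of \rf{linear-bsde} cancels the drift term $\sum_jG^{ij}_sV^{ij}_s\,ds$ and leaves, with $\hat K^i_s:=K^i_s+\sum_{j\ne i}F^{ij}_sU^j_s$,
\aa{
U^i_t=\xi^i+\int_t^T\big(\hat K^i_s+F^{ii}_sU^i_s\big)\,ds+\sum_{j=1}^n\int_t^TV^{ij}_s\,d\tilde B^j_s.
}
Applying It\^o's formula to $e^{\int_0^tF^{ii}_r\,dr}U^i_t$ and taking the $\Qnu^i$-conditional expectation given $\mc F_t$ --- the stochastic-integral term being removed by a routine localization, legitimate since $s\mapsto e^{\int_0^sF^{ii}_r\,dr}$ is bounded, $d\Qnu^i/d\PP$ has moments of all orders, and the data are square-integrable --- yields
\aa{
U^i_t=\E_{\Qnu^i}\Big[\xi^ie^{\int_t^TF^{ii}_s\,ds}+\int_t^Te^{\int_t^sF^{ii}_r\,dr}\hat K^i_s\,ds\,\Big|\,\mc F_t\Big].
}
Since $F^{ij}_s\gt 0$ for $j\ne i$ by (ii), once we know $U^j_s\gt 0$ for every $j$ we get $\hat K^i_s\gt K^i_s\gt 0$, and the estimate \rf{u-est-1111} follows at once.

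It therefore remains to prove the componentwise non-negativity $U^j_t\gt 0$, the genuine ``stochastic Wazewski'' part of the statement; because the identity just obtained couples $U^i$ to all the other coordinates through $\hat K^i$, this is the main obstacle. I would resolve the circularity by a Picard iteration that preserves non-negativity: set $U^{(0)}\equiv 0$ and, given an iterate $U^{(k)}=(U^{1,(k)},\dots,U^{m,(k)})$ lying in $\mathcal S^2$ with all coordinates non-negative, let $U^{i,(k+1)}$ be the solution of the scalar linear BSDE obtained from the $i$-th equation of \rf{linear-bsde} by replacing $\sum_{j\ne i}F^{ij}_sU^j_s$ with $\sum_{j\ne i}F^{ij}_sU^{j,(k)}_s$. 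These $m$ scalar BSDEs are decoupled and well posed (linear, hence Lipschitz, generators and $L_2$ data, by the classical existence theorem), and running the computation above for each of them gives
\aa{
U^{i,(k+1)}_t&=\E_{\Qnu^i}\Big[\xi^ie^{\int_t^TF^{ii}_s\,ds}+\int_t^Te^{\int_t^sF^{ii}_r\,dr}\Big(K^i_s+\sum_{j\ne i}F^{ij}_sU^{j,(k)}_s\Big)ds\,\Big|\,\mc F_t\Big]\\
&\gt\E_{\Qnu^i}\Big[\xi^ie^{\int_t^TF^{ii}_s\,ds}+\int_t^Te^{\int_t^sF^{ii}_r\,dr}K^i_s\,ds\,\Big|\,\mc F_t\Big]\ \gt\ 0,
}
so every iterate has non-negative coordinates and already satisfies \rf{u-est-1111}; this closes the induction. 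Standard a priori estimates for linear BSDEs --- using the boundedness of the $F^{ij},G^{ij}$ from (iii) and an exponentially weighted $L_2$-norm on $\Om\x[0,T]$ with sufficiently large weight --- show that $U^{(k)}\mto U^{(k+1)}$ is a contraction, hence $(U^{(k)},V^{(k)})$ converges in $\mathcal S^2\x\mathcal H^2$ to the unique solution $(U,V)$ of \rf{linear-bsde}. Using the $\mathcal S^2$-convergence of $U^{i,(k)}$ to $U^i$ one passes to the limit $k\to\infty$ in the displayed inequality, obtaining $U^i_t\gt 0$ for all $t$, a.s., for every $i$; inserting this back into the exact identity for $U^i_t$ gives $\hat K^i_s\gt K^i_s$ and proves \rf{u-est-1111}.

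The conceptual heart of the argument --- and the only genuinely non-routine step --- is the componentwise non-negativity, i.e. untangling the circular coupling of the coordinates, which the non-negativity-preserving Picard scheme is designed to do; everything else (the change of measure, the closed-form solution of the scalar linear BSDE, the localization needed to discard its martingale part, and the convergence of the iteration) is standard, the only mild subtlety being that one must keep track of integrability under $\Qnu^i$ rather than under $\PP$.
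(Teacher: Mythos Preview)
Your proof is correct, and in its core --- the Girsanov change of measure to absorb the $V$-drift, followed by the integrating factor $e^{\int_0^t F^{ii}_s\,ds}$ and conditioning --- it coincides with the paper's argument. The difference lies solely in how componentwise non-negativity $U^j_t\gt 0$ is obtained. The paper bypasses your Picard iteration with a one-line trick: it replaces $U^j_s$ by $|U^j_s|$ in the off-diagonal coupling $\sum_{j\ne i}F^{ij}_sU^j_s$, yielding a modified system whose generator is still Lipschitz (since $||a|-|b||\lt|a-b|$) and whose unique solution therefore exists by the standard theorem. For this modified system the non-negativity of each coordinate follows \emph{immediately} from the explicit representation, because the off-diagonal contribution $\sum_{j\ne i}F^{ij}_s|U^j_s|$ is manifestly non-negative; hence \rf{u-est-1111} holds for it. But then $U^j_s\gt 0$ forces $|U^j_s|=U^j_s$, so the modified system is in fact the original one, and uniqueness finishes the argument. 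This avoids the iteration, the contraction estimate in a weighted norm, and the limit passage altogether. Your route purchases nothing additional here, though it is arguably more robust in settings where an absolute-value modification of the generator would spoil some other needed structure.
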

\begin{rem}
\rm
Note that according to \cite{Karo} (Theorem 2.1), under assumptions (i) and (iii), there exists 
an $\mc F_t$-adapted pair  $(U_t,V_t)$ which solves  \rf{linear-bsde} and such that
$U_t$ is continuous and $V_t$ is $\mc F_t$-predictable. This pair is unique in
the $L_2(\Om\x [0,T])$-norm.
\end{rem}
\begin{proof}[Proof of Theorem \ref{t1111}]
Consider the system of BSDEs for $i=1,\ldots, m$
\aaa{
\lb{bsde-1111}
U^i_t = \xi^i + \int_t^T \Big(K^i_s  + \sum_{j=1, j\ne i}^m F^{ij}_s |U^j_s| + F^{ii}_s U^i_s +
(G^{i}_s, V^i_s)\Big) ds +  \int_t^T (V^i_s, dB_s), 
}
where $G^i_s = (G^{i1}_s, \ldots, G^{in}_s)$, $V^i_s = (V^{i1}_s, \ldots, V^{in}_s)$.

First, let is observe that system \rf{bsde-1111}  also possesses an $\mc F_t$-adapted solution $(U_t,V_t)$
such that $U_t$ is continuous and $V_t$ is $\mc F_t$-predictable. 
This, in particular, follows from the inequality $||a|-|b||\lt |a-b|$, $a,b\in\Rnu$.
Moreover,
this pair is unique in the  $L_2(\Om\x [0,T])$-norm (see \cite{Karo}, Theorem 2.1).

Next, by the boundedness of $G^{ij}$ and the
multidimensional version of Girsanov's theorem (see, e.g., \cite{jeanblanc}, Section 1.7), 
for each fixed $i\in\{1,\ldots, m\}$, $\td B_{t,i} = B_t - \int_0^t G^i_s ds$
is a Brownian motion under the probability measure $\Qnu^i$ defined as
\aa{
\Qnu^i|_{\mc F_t} = L^i_t\, \PP|_{\mc F_t}, \qquad
L_t^i = \exp\Big\{\int_0^t (G^{i}_s, dB_s) -  \frac12 \int_0^t |G^{i}_s|^2 ds\Big\}.
}
Under the probability measure $\Qnu^i$, 
the $i$-th  BSDE in \rf{bsde-1111} transforms to 
\aa{
U^i_t = \xi^i + \int_t^T (K^i_s  + \sum_{j=1, j\ne i}^m F^{ij}_s |U^j_s| + F^{ii}_s U^i_s) ds 
+  \int_t^T (V^i_s, d\td B_{s,i}),
}
Applying It\^o's formula to $e^{\int_0^t F^{ii}_s ds} U^i_t$, noticing  that the term 
$e^{\int_0^s F^{ii}_r dr} F^{ii}_s U^i_s$  cancels with the equal one,
 and taking the conditional expectation
$\E_{\Qnu^i}[\fdot|\mc F_t]$ with respect to the measure $\Qnu^i$, we obtain
\mmm{
\lb{u1111}
U^i_t e^{\int_0^t  F^{ii}_s ds} 
= \E_{\Qnu^i}\Big[ \xi^i e^{\int_0^T  F^{ii}_s ds} +
 \int_t^T   e^{\int_0^s F^{ii}_r dr} (K^i_s  + \sum_{j=1, j\ne i}^m F^{ij}_s |U^j_s| ) ds \,\big|\,\mc F_t \Big]\\
\gt \E_{\Qnu^i}\big [ \xi^i e^{\int_0^T  F^{ii}_s ds} 
+ \int_t^T   e^{\int_0^s F^{ii}_r dr} K^i_s \, ds \, \big |\, \mc F_t\big] \quad \text{a.s.}
}
This immediately implies \rf{u-est-1111} for the solution of the modified BSDE
\rf{bsde-1111}. Thus, $U^i_t\gt 0$ for all $i$ a.s., and hence,
$(U^1_t, \ldots, U^m_t)$ is also a solution to \rf{linear-bsde}.  By uniqueness, we conclude that 
\rf{u-est-1111}  holds for all components of the solution to the BSDE
\rf{linear-bsde}.
\end{proof}
\begin{cor}
\lb{cor-qi1111}
If, under the assumptions of Theorem \ref{t1111}, each $G^{ij}_t$ identically equals zero
(i.e., the generator does not depend on $V_t$), then, in \rf{u-est-1111}, $\Qnu^i = \PP$ for all $i$.
\end{cor}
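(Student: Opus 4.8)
The plan is to specialize the construction carried out in the proof of Theorem \ref{t1111} to the case $G^{ij}_t\equiv 0$ and to observe that the Girsanov density collapses to the constant $1$. Recall that in that proof, for each fixed $i\in\{1,\ldots,m\}$, the measure $\Qnu^i$ is defined by $\Qnu^i|_{\mc F_t} = L^i_t\,\PP|_{\mc F_t}$, where
\aa{
L^i_t = \exp\Big\{\int_0^t (G^i_s, dB_s) - \tfrac12\int_0^t |G^i_s|^2\, ds\Big\}, \qquad G^i_s = (G^{i1}_s,\ldots, G^{in}_s).
}
If every $G^{ij}_t$ vanishes identically, then $G^i_s = 0$ for all $s$ a.s., so both the stochastic integral and the Lebesgue integral in the exponent are $0$, and hence $L^i_t = 1$ for all $t\in[0,T]$, $\PP$-a.s.; note that no integrability subtlety arises here, since $L^i_t$ is literally the constant process rather than merely a true martingale. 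Therefore $\Qnu^i|_{\mc F_t} = \PP|_{\mc F_t}$ for every $t\in[0,T]$, and since the filtration $\{\mc F_t\}$ generates $\mc F$ up to $\PP$-null sets, we conclude $\Qnu^i = \PP$ for each $i$.

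It then remains to check that the estimate obtained in this way is indeed \rf{u-est-1111} with $\Qnu^i$ replaced by $\PP$. With $G^i\equiv 0$ one has $\td B_{t,i} = B_t - \int_0^t G^i_s\, ds = B_t$, so the auxiliary system \rf{bsde-1111} is already driven by $B_t$ and no change of measure is actually needed: the application of It\^o's formula to $e^{\int_0^t F^{ii}_s ds} U^i_t$, the cancellation of the term $e^{\int_0^s F^{ii}_r dr} F^{ii}_s U^i_s$, and the subsequent conditioning all go through verbatim with $\E_{\Qnu^i}[\fdot\,|\mc F_t]$ replaced by $\E[\fdot\,|\mc F_t]$. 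This yields the chain of inequalities \rf{u1111}, and hence \rf{u-est-1111}, with $\Qnu^i = \PP$.

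There is essentially no obstacle here: the statement is a direct specialization of Theorem \ref{t1111}. The only minor points worth recording explicitly are that the exponential density $L^i_t$ is the constant $1$ precisely because the drift correction vanishes, so that Girsanov's theorem need not even be invoked, and that agreement of $\Qnu^i$ and $\PP$ on each $\mc F_t$ propagates to all of $\mc F$. I would phrase the write-up to emphasize these two observations and then simply refer back to the computation \rf{u1111} in the proof of Theorem \ref{t1111}.
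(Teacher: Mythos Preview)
Your proposal is correct and follows exactly the reasoning implicit in the paper: the corollary is stated without proof there because it is immediate from the construction of $\Qnu^i$ via the Girsanov density $L^i_t$ in the proof of Theorem~\ref{t1111}, and you have spelled out precisely that observation. There is nothing to add.
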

 
\section{Main result}
Let, as before, $(\Om, \mc F, \mc F_t, \PP)$ be a filtered probability space, where $\mc F_t$
 is the natural filtration of an $n$-dimensional Brownian motion $B_t$ augmented with $\PP$-null sets. 

 Consider the BSDE \rf{bsde11:11} which we rewrite as a system:
 \aaa{
 \lb{bsde1111}
 Y^i_t = \xi^i + \int_t^T g^i(s,Y_s,Z_s) ds + \int_t^T (Z^i_s, dB_s), \quad i=1,\ldots, m.
 }
 Here, $\xi=(\xi^1,\ldots, \xi^m)$ is an $\mc F_T$-measurable random variable, 
 $g^i$ are components of $g$, and 
 $Z^i_t = (Z^{i1}_t, \ldots, Z^{in}_t)$ is the $i$-th line of $Z_t$. 
 We aim to find sufficient conditions when $Y^i_t$ possesses a density with respect to Lebesgue
 measure and find Gaussian-type bounds on this density.
 
 \subsection{Gaussian-type density bounds in the situation when the Malliavin derivative is
 multidimensional}

We will need an ``$n$-dimensional'' version of \rf{gf}. Namely, if $\mc H = L_2([0,\infty),\Rnu^n)$ and
$\W(h) = \sum_{i=1}^n \int_0^{+\infty} h_i(t) dB^i_t$, where $B^i_t$ are independent real-valued  standard Brownian motions, 
then $g_F(F) = \ffi_F(F)$, where 
\aaa{
\lb{ffiF}
\ffi_F(x) =  \E \Big[\int_0^\infty \sum_{i=1}^nD^i_t F \, \E[D^i_t F |\mc F_t] \; dt\, \big| \, F=x \Big], \quad x\in\Rnu.
}
Proposition \ref{pro2} below is an extension of Proposition 2.3 in \cite{ng} to the case of 
$\mc H =  L_2([0,\infty),\Rnu^n)$.
\begin{pro}
\lb{pro2}
Let $F\in \D^{1,2}$ be a random variable such that $\E[F] = 0$ and
$\E\int_0^\infty \|D_s F\|^2 ds <\infty$.
Then, $g_F(F) = \ffi_F(F)$ a.s., where $\ffi_F(x)$ is defined by \rf{ffiF}.
\end{pro}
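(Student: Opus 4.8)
The plan is to transcribe to the setting $\mc H = L_2([0,\infty),\Rnu^n)$ the argument behind Proposition~2.3 of \cite{ng}, the only genuinely new ingredient being the form of the Itô isometry for an $n$-dimensional Brownian motion. Both $g_F(F) = \E[(DF,-DL^{-1}F)_{\mc H}\mid F]$ and $\ffi_F(F)$ are $\sg(F)$-measurable and belong to $L^1$ — for the first one by Cauchy--Schwarz together with the standard contraction estimate $\E\|DL^{-1}F\|_{\mc H}^2\lt\E\|DF\|_{\mc H}^2$, for the second one by Cauchy--Schwarz and Jensen, in both cases using the hypothesis $\E\int_0^\infty\|D_sF\|^2\,ds<\infty$. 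Hence, since bounded Lipschitz functions of $F$ form a class that determines conditional expectations given $F$, it suffices to prove that
\[
\E[h(F)\,(DF,-DL^{-1}F)_{\mc H}] \;=\; \E\Big[h(F)\int_0^\infty\sum_{i=1}^n D^i_tF\,\E[D^i_tF\mid\mc F_t]\,dt\Big]
\]
for every bounded Lipschitz $h\colon\Rnu\to\Rnu$.

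Fix such an $h$ and let $H$ be the $C^1$ primitive of $h$ with $H(0)=0$; then $H$ is Lipschitz, $G:=H(F)\in\D^{1,2}$, and $DG=h(F)\,DF$ by the chain rule (\cite{nualart}). For the left-hand side, $(DG,-DL^{-1}F)_{\mc H}=h(F)\,(DF,-DL^{-1}F)_{\mc H}$, and since $\E F=0$ one has $F=\dl(-DL^{-1}F)$ (because $L=-\dl D$ and $LL^{-1}F=F-\E F$); the duality between $D$ and $\dl$ then gives $\E[h(F)(DF,-DL^{-1}F)_{\mc H}]=\E[G\,\dl(-DL^{-1}F)]=\E[H(F)\,F]$. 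For the right-hand side, write $D^i_t(H(F))=h(F)D^i_tF$; using that $\E[D^i_tF\mid\mc F_t]$ is $\mc F_t$-measurable one may insert a conditional expectation in front of $D^i_t(H(F))$, and after Fubini the right-hand side equals $\E[\int_0^\infty\sum_i\E[D^i_t(H(F))\mid\mc F_t]\,\E[D^i_tF\mid\mc F_t]\,dt]$. Now apply the Clark--Ocone representation on $\mc H=L_2([0,\infty),\Rnu^n)$: $F=\sum_i\int_0^\infty\E[D^i_tF\mid\mc F_t]\,dB^i_t$ and $H(F)=\E[H(F)]+\sum_i\int_0^\infty\E[D^i_t(H(F))\mid\mc F_t]\,dB^i_t$; multiplying these and taking expectations, the Itô isometry for the $n$-dimensional Brownian motion — where the cross terms with $i\ne i'$ vanish precisely because $B^i$ and $B^{i'}$ are independent — also yields $\E[H(F)\,F]$. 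Thus both sides of the displayed identity equal $\E[H(F)F]$, which completes the argument.

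I do not expect a structural obstacle here: the proof is the verbatim $n$-dimensional transcription of the one-dimensional statement, and the places needing care are purely technical — (a) the Lipschitz chain rule and the membership $H(F)\in\D^{1,2}$; (b) the interchange of $\int_0^\infty dt$ with $\E$ and with $\E[\,\cdot\mid\mc F_t]$, which is exactly where $\E\int_0^\infty\|D_sF\|^2\,ds<\infty$ is used; and (c) the density/measure-determining argument used to pass from equality of the two integrals for all bounded Lipschitz $h$ to equality of the conditional expectations almost surely. The single step that is ``new'' relative to \cite{ng} is the Itô isometry, and it produces the sum over the components appearing in \rf{ffiF} exactly because the component Brownian motions $B^k$ are mutually independent.
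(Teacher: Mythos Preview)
Your proof is correct and follows essentially the same route as the paper's: both show that $\E[h(F)\,g_F(F)]$ and $\E[h(F)\,\ffi_F(F)]$ coincide with $\E[F\,H(F)]$ (where $H'=h$) via, respectively, the duality $F=\dl(-DL^{-1}F)$ and the Clark--Ocone representation combined with the $n$-dimensional It\^o isometry, and then conclude by a density argument over test functions. The paper packages the Clark--Ocone step as a covariance identity $\cov(F,G)=\E\int_0^\infty\sum_i D^i_sG\,\E[D^i_sF\mid\mc F_s]\,ds$, quotes formula~(3.15) of \cite{NV} for the first step, and uses primitives of indicators (approximated by mollifiers) instead of bounded Lipschitz functions, but these are cosmetic differences.
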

\begin{proof}
First, we prove that for the covariance $\cov(F,G)$ of two random variables $F,G\in \D^{1,2}$, it holds that
\aaa{
\lb{cov}
\cov(F,G) = \E\Big[ \int_0^\infty\sum_{i=1}^nD^i_s G \, \E[D^i_s F |\mc F_s] \; ds \Big].
}
Remark that in the case $n=1$, \rf{cov} was obtained in \cite{privault} (Proposition 3.4.1).
We show \rf{cov} for $n>1$. By the Clark-Ocone formula  (see, e.g., \cite{privault}, p. 171), 
\aa{
F = \E F + \sum_{i=1}^n \int_0^{+\infty} \E[D^i_sF \,| \,\mc F_s] dB^i_s.
}
Therefore,
\mmm{
\lb{f-cov}
\cov(F,G) = \E\big[ (F-\E F)(G-\E G)\big] \\  = \E\Big[\sum_{i=1}^n \int_0^\infty \E [ D^i_sF \,| \,\mc F_s] dB^i_s
\sum_{i=1}^n \int_0^\infty \E[D^i_sG \,| \,\mc F_s] dB^i_s\Big] \\
 =\E\int_0^\infty\sum_{i=1}^n \E\big[ \E[ D^i_sF \,| \,\mc F_s] D^i_s G| \mc F_s \big] \; ds
=  \E\Big[ \int_0^\infty\sum_{i=1}^nD^i_s G \, \E[D^i_s F |\mc F_s] \; ds \Big].
}
Now by formula (3.15) in \cite{NV}, for any $\C^1_b$-function $f:\Rnu\to\Rnu$,
\aa{
\cov(F,f(F)) = \E[F\, f(F)] = \E[f'(F) g_F(F)].
}
On the other hand, by \rf{f-cov},
\aa{
\cov(F,f(F)) = \E\Big[ \int_0^\infty\sum_{i=1}^n f'(F) D^i_s F \, \E[D^i_s F |\mc F_s] \; ds \Big] = \E[f'(F) \ffi_F(F)].
}
Take the function $f(x) = \int_0^x \ind_B(y) dy$, where $B\sub\Rnu$ is a Borel set. By approximating $f$ by mollifiers, and then
passing to the limit, we obtain the identity
$\E[\ind_B(F) g_F(F)] = \E[\ind_B(F) \ffi_F(F)]$, 
or, which is the same, $\int_B g_F(x) \mu_F(dx) = \int_B \ffi_F(x) \mu_F(dx)$, where $\mu_F = \PP \circ F^{-1}$. 
Therefore, $g_F(F) = \ffi_F(F)$ a.s.
\end{proof}
The following corollary will be used  for establishing the bounds on the density of $F$
if we know lower bounds on the components $D^k F$ and an upper bound on $|DF|$.
\begin{cor}
\lb{cor-main}
Let $F\in\D^{1,2}$ be $\mc F_t$-measurable.
Assume there exist functions $[0,t]\to (\Rnu_+)^n$, $r\mto m_{r,t}=(m^1_{r,t},m^2_{r,t},\ldots, m^n_{r,t})$  
and $[0,t]\to (0,\infty)$, $r\mto M_{r,t}$ such that for each $k=1,\ldots, n$, 
$D^k_{\!\fatdot} F \gt m^k_{\fatdot,t}$  a.s.,
and, moreover,  $|D_{\!\fatdot} \,F|\lt M_{\fatdot,t}$  a.s.
Further assume that $\la(t) = \int_0^t |m_{r,t}|^2 dr>0$ and $\La(t) =  \int_0^t M^2_{r,t} \, dr<\infty$.
Then, $F$ admits a density $\rho_F$ w.r.t. Lebesgue
measure, and for almost all $x\in\Rnu$, 
\aa{     
\frac{\E |F-\E[F]|}{2\La(t)} \exp\Big(-\frac{\big(x-\E[F]\big)^2}{2\la(t)}\Big) \lt 
\rho_F(x) \lt \frac{\E |F-\E[F]|}{2\la(t)} \exp\Big(-\frac{\big(x-\E[F]\big)^2}{2\La(t)}\Big).
}
Furthermore, for all $x>0$, the tail probabilities satisfy
\aa{     
\PP(F \gt \E[F]+x) \lt \exp\Big(-\frac{x^2}{2\La(t)}\Big) \quad \text{and}  \quad
\PP(F \lt \E[F]-x) \lt \exp\Big(-\frac{x^2}{2\La(t)}\Big).
}
\end{cor}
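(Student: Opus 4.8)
The plan is to apply Proposition~\ref{pro2} together with the Nourdin-Viens formula from Proposition~\ref{pro11}, reducing everything to pointwise upper and lower control of the quantity $g_F(F) = \ffi_F(F)$. Since the hypotheses are stated in terms of $F$ rather than $F - \E[F]$, the first step is to note that $\tilde F = F - \E[F]$ is a zero-mean element of $\D^{1,2}$ with $D_r \tilde F = D_r F$ for all $r$, so the componentwise bounds $m^k_{r,t} \le D^k_r F \le$ (implied by $|D_r F| \le M_{r,t}$) and the bound $|D_r F| \le M_{r,t}$ carry over verbatim to $\tilde F$. Because $F$ is $\mc F_t$-measurable, the integrals in \rf{ffiF} may be truncated at $t$: $D^i_r F = 0$ for $r > t$, hence $\ffi_{\tilde F}(x) = \E\big[\int_0^t \sum_{i=1}^n D^i_r F \,\E[D^i_r F \mid \mc F_r]\, dr \mid \tilde F = x\big]$.

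The core estimate is to bound the integrand $\sum_{i=1}^n D^i_r F \,\E[D^i_r F \mid \mc F_r]$ pointwise, before taking conditional expectation. For the lower bound: since $D^k_r F \ge m^k_{r,t} \ge 0$ a.s.\ and conditional expectation is monotone, also $\E[D^k_r F \mid \mc F_r] \ge m^k_{r,t} \ge 0$ a.s.; multiplying two nonnegative quantities and summing gives $\sum_{i=1}^n D^i_r F \,\E[D^i_r F \mid \mc F_r] \ge \sum_{i=1}^n (m^i_{r,t})^2 = |m_{r,t}|^2$. Integrating in $r$ over $[0,t]$ yields a lower bound of $\la(t) > 0$ inside the conditional expectation, so $\ffi_{\tilde F}(\tilde F) \ge \la(t)$ a.s. For the upper bound I would use Cauchy-Schwarz in $\Rnu^n$: $\sum_{i=1}^n D^i_r F \,\E[D^i_r F \mid \mc F_r] \le |D_r F| \cdot |\E[D_r F \mid \mc F_r]|$, and then $|\E[D_r F \mid \mc F_r]| \le \E[|D_r F| \mid \mc F_r] \le M_{r,t}$ by the conditional Jensen inequality and the a.s.\ bound $|D_r F| \le M_{r,t}$ (which is deterministic), while $|D_r F| \le M_{r,t}$ directly; hence the integrand is $\le M_{r,t}^2$, and integrating gives $\ffi_{\tilde F}(\tilde F) \le \La(t)$ a.s. Thus $0 < \la(t) \le g_{\tilde F}(\tilde F) \le \La(t)$ a.s. In particular $g_{\tilde F}(\tilde F) > 0$, so Proposition~\ref{pro11} applies: $\tilde F$ has a density $\rho_{\tilde F}$, and therefore so does $F$, with $\rho_F(x) = \rho_{\tilde F}(x - \E[F])$.

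It remains to turn the bounds on $g_{\tilde F}$ into bounds on the density via the explicit Nourdin-Viens formula $\rho_{\tilde F}(y) = \frac{\E|\tilde F|}{2 g_{\tilde F}(y)} \exp\big(-\int_0^y \frac{z}{g_{\tilde F}(z)}\, dz\big)$, valid for a.e.\ $y$ in the support. Replacing $g_{\tilde F}$ by the constants $\la(t)$ and $\La(t)$ in the prefactor and in the exponent (noting that $\int_0^y \frac{z}{g_{\tilde F}(z)}\,dz$ is squeezed between $\frac{y^2}{2\La(t)}$ and $\frac{y^2}{2\la(t)}$, being careful that $z$ and $y$ have the same sign so $z/g_{\tilde F}(z) \ge 0$ over the range of integration) gives
\aa{
\frac{\E|\tilde F|}{2\La(t)} \exp\Big(-\frac{y^2}{2\la(t)}\Big) \le \rho_{\tilde F}(y) \le \frac{\E|\tilde F|}{2\la(t)} \exp\Big(-\frac{y^2}{2\La(t)}\Big)
}
for a.e.\ $y$, and substituting $y = x - \E[F]$ yields the claimed density bounds. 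For the tail bounds, I would integrate the upper density bound: $\PP(F \ge \E[F] + x) = \int_x^\infty \rho_{\tilde F}(y)\, dy \le \frac{\E|\tilde F|}{2\la(t)} \int_x^\infty e^{-y^2/(2\La(t))}\, dy$ and estimate the Gaussian tail by $\int_x^\infty e^{-y^2/(2\La(t))}\,dy \le e^{-x^2/(2\La(t))} \cdot (\text{something})$; cleaner is to invoke the standard sub-Gaussian concentration that follows from $g_{\tilde F}(\tilde F) \le \La(t)$ (this is exactly the hypothesis of the Nourdin-Viens concentration inequality, \cite{NV}), which directly gives $\PP(|\tilde F| \ge x) \le 2 e^{-x^2/(2\La(t))}$, or the one-sided versions without the factor $2$. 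The main obstacle is the bookkeeping in the exponent of the density formula: one must verify that the sign of the integrand $z/g_{\tilde F}(z)$ is constant on the segment from $0$ to $y$ so that the monotone substitution of $\la(t), \La(t)$ goes in the correct direction for both the upper and the lower bound, and that $0 \in \mathrm{supp}(\rho_{\tilde F})$ so the formula is being applied on the right interval — both of which are guaranteed by Proposition~\ref{pro11} once positivity of $g_{\tilde F}(\tilde F)$ is established.
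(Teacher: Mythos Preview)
Your proposal is correct and follows essentially the same route as the paper: bound $\ffi_{\tilde F}(\tilde F)$ between the deterministic constants $\la(t)$ and $\La(t)$ using the componentwise lower bounds and the norm upper bound, then feed these into the Nourdin--Viens machinery. The paper's proof is more compressed---it simply asserts $\la(t)\le\ffi_F(F)\le\La(t)$ and then cites Corollary~3.5 and Theorem~4.1 of \cite{NV} for the density and tail bounds respectively---whereas you spell out the Cauchy--Schwarz/conditional-Jensen step for the upper bound and partially re-derive the density inequality from the explicit formula in Proposition~\ref{pro11}; but the substance is the same.
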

\begin{proof}
By \rf{ffiF},  it holds that $\la(t) \lt \ffi_F(F) \lt \La(t)$ a.s.
By Proposition \ref{pro2} and Corollary 3.5 in \cite{NV}, the law of $F$ has a density $\rho_F$ w.r.t. Lebesgue measure
and the above estimate for the density $\rho_F$ holds true. 
The estimates on the tail probabilities follow from Theorem 4.1 in \cite{NV}.
\end{proof}

\subsection{Theorems on Gaussian-type bounds for multidimensional BSDEs}
\lb{sec1111}
First we would like to ensure the existence and the Malliavin differentiability of the solution to \rf{bsde1111}.  
To this end, we introduce assumptions (A1)--(A6) (see \cite{Karo}, Theorem 2.1 and Proposition 5.3):
 \bi
 \item[\bf (A1)] 
 $g: [0,T] \x \Om  \x\Rnu^m \x \Rnu^{m\x n} \to\Rnu^m$ possesses uniformly bounded
 continuous partial derivatives $\pl_y g(t,y,z)$ and $\pl_z g(t,y,z)$.
  \item[\bf (A2)]  For each $(y,z)$, $[0,T]\x \Om\to\Rnu^m$,  $(\om,t)\mto g(t,y,z)$ is an 
  $\mc F_t$-predictable process and $g(\fdot,0,0)\in L_2([0,T]\x \Om)$.
  \ei
 Remark that under (A1) and (A2), there exists an $\mc F_t$-adapted pair $(Y_t,Z_t)$ solving 
 \rf{bsde1111} and such that $Y_t$ is continuous and $Z_t$ is $\mc F_t$-predictable; 
the pair is unique with respect to the norm
 $\E \sup_{[0,T]}|Y_t|^2 + \E\int_0^T|Z_t|^2 dt$
  (see \cite{Karo}, Theorem 2.1).
For the Malliavin differentiability of $(Y_t,Z_t)$, we assume (A3)--(A6) (\cite{Karo}, Proposition 5.3).
  \bi
   \item[\bf (A3)] For all $(t,y,z)$,
   $g(t,y,z)\in \mathbb D^{1,2}$ and
the map  $\Om \x [0,T]\to (L_2[0,T])^{m\x n}$, $(\om,t) \mto D g(t,y,z)$
 admits an $\mc F_t$-predictable version.
  \item[\bf (A4)]  $\xi \in L_4(\Om)\cap  \mathbb D^{1,2}$.
  \item[\bf (A5)] $\int_0^T \! |g(s,0,0)|^2 ds \in L_2(\Om)$;
  $D g(\fdot,0,0), D g(\fdot,Y_{\fdot},Z_{\fdot}) \in L_2([0,T]^2 \x \Om)$; 
   \item[\bf (A6)] For all $(y_1, z_1, y_2, z_2)$ and $t\in [0,T]$,
   \aa{
  |D_r g(t,\om,y_1,z_1)  - D_rg(t,\om,y_2,z_2)|
  \lt K(r,t,\om) (|y_1-y_2| + |z_1 - z_2|)
  }
  where for a.e. $r$,  
  $K(r,t,\fdot)$ is an $\Rnu_+$-valued $\mc F_t$-adapted process such that
  $K(\fdot)\in L_4([0,T]^2 \x \Om)$.
  \ei
  According to \cite{Karo}, Proposition 5.3, under (A1)--(A6), $Y_t,Z_t\in \mathbb D^{1,2}$ and 
  there is a version of $\{(D_r^k Y_t, D^k_r Z_t), 0\lt r, t \lt T\}$ satisfying the BSDE
  \mmm{
\lb{mal-1111}
D^k_r Y_t=  D^k_r \xi +  \int_t^T \big[ D^k_r g(s, Y_s, Z_s)   +\pl_y g(s,Y_s,  Z_s)D^k_r Y_s  \\
+\pl_z g(s,Y_s,  Z_s)D^k_r Z_s\big] ds 
+ \int_t^T  D^k_r Z_s dB_s \quad  \text{if} \;\; t\gt r,
}
and $D_r^k Y_t = 0$, $D^k_r Z_t = 0$, if $t<r$.
Here, $D^k_r g(s, Y_s, Z_s) =  D^k_r g(s, y, z)\big|_{y=Y_s, z=Z_s}$.
 Finally, assumptions (A7)--(A10) are necessary to 
 guarantee the existence of 
 a lower bound on each component $D^k_r Y^i_t$ and 
 an upper bound on $|D_r Y_t^i|$.
\bi
\item[\bf (A7)] Each component $g^i(t,y,z)$ of $g$ depends only on 
$z^i=(z^{i1}, \ldots, z^{in})$ in the last argument. 
\item[\bf (A8)] 
There exist non-negative functions $\beta_{ik}(r)$, $i=1,\ldots, m$, $k=1,\ldots, n$, 
with the property $\int_0^t \big(\sum_{k=1}^n \beta_{ik}^2(r)\big) dr > 0$
and a positive square-integrable function $\Phi(r)$ such that 
a.s. $D_r^k \xi^i \gt  \beta_{ik}(r)$ and $|D_r \xi|\lt \Phi(r)$, $r\in [0,T]$.
\item[\bf (A9)]  For all $t,y,z$, a.s.,  $\pl_{y_j}g^i(t,y,z) \gt 0$ for $i\ne j$.
\item[\bf (A10)] 
There exists a version of $r\mto D_r g(t,y,z)$ such that
for all $k,i$ and $r,t\in [0,T]$, a.s., $D^k_r g^i(t, y, z)\gt 0$ and $\E \big[\int_t^T |D_r g(s,Y_s,Z_s)|^2 ds\, \big | \, \mc F_t\big]\lt \Phi^2(r)$, where $\Phi(r)$ is as in (A8).
\ei
Theorem \ref{t-main1111} and Corollary \ref{cor1111} below are our main results.
\begin{thm} 
\lb{t-main1111}
Let assumptions (A1)-(A10) be fulfilled. Further let $Y_t$ be the first component of the
unique $\mc F_t$-adapted solution to the BSDE \rf{bsde1111} (whose existence is known under 
(A1) and (A2)).
Then, 
there exists a density $\rho_{Y^i_t}$ of $Y^i_t$ w.r.t. Lebesgue measure. Moreover, 
there are positive functions $\la_i(t)$, $i=1,\ldots, m$, and $\La(t)$, $t\in [0,T]$, that can be computed explicitly, such that
for almost all $x\in\Rnu$,
\mmm{
\lb{est1111}     
\frac{\E |Y^i_t-\E Y^i_t|}{2\La(t)} \exp\Big(\!-\frac{\big(x-\E Y^i_t\big)^2}{2\la_i(t)}\Big) \lt 
p_{Y^i_t}(x)  \\ \lt \frac{\E |Y^i_t-\E Y^i_t|}{2\la_i(t)} \exp\Big(\!-\frac{\big(x-\E Y^i_t\big)^2}{2\La(t)}\Big).
}
Furthermore, for all $x>0$, the tail probabilities satisfy
\eqn{ 
\lb{est2222}    
\PP(Y^i_t \gt \E Y^i_t +x) \lt \exp\Big(\!-\frac{x^2}{2\La(t)}\Big),\\
\PP(Y^i_t \lt \E Y^i_t-x) \lt \exp\Big(\! -\frac{x^2}{2\La(t)}\Big).
}
\end{thm}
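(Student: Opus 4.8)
The plan is to apply Corollary \ref{cor-main} to the $\mc F_t$-measurable random variable $F=Y^i_t$, separately for each $i\in\{1,\dots,m\}$. This reduces the statement to two tasks: (a) producing a \emph{deterministic} lower bound $m^k_{r,t}>0$ for each component $D^k_r Y^i_t$, $k=1,\dots,n$, holding a.s.\ for a.e.\ $r\in[0,t]$, with $\int_0^t\sum_k (m^k_{r,t})^2\,dr>0$; and (b) producing a deterministic upper bound $M_{r,t}$ for $|D_r Y^i_t|$, holding a.s., with $0<\int_0^t M_{r,t}^2\,dr<\infty$. One then sets $\la_i(t):=\int_0^t\sum_{k}(m^k_{r,t})^2\,dr$ and $\La(t):=\int_0^t M_{r,t}^2\,dr$, and \rf{est1111} and \rf{est2222} follow directly from Corollary \ref{cor-main}. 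The starting point is that (A1)--(A6) guarantee the existence of $(Y,Z)$ with $Y_t,Z_t\in\D^{1,2}$ and the linear BSDE \rf{mal-1111} for the Malliavin derivatives $D^k_r Y_t$; in particular $Y^i_t\in\D^{1,2}$, so Corollary \ref{cor-main} is applicable once (a) and (b) are in hand.

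For (a), fix $i$ and $k$ and, for a.e.\ $r\in[0,t]$, view \rf{mal-1111} as a BSDE in the time variable $t\in[r,T]$. The crucial observation is that (A7) forces $\pl_{z^{jl}}g^i\equiv0$ whenever $j\neq i$, so the only $Z$-contribution in the $i$-th equation of \rf{mal-1111} is the pairing of $(\pl_{z^{i1}}g^i,\dots,\pl_{z^{in}}g^i)$ with the $i$-th row $D^k_r Z^i$. Hence the system $(D^k_r Y^j_t)_{j=1}^m$ is precisely of the form \rf{linear-bsde}, with the identifications $\xi^j=D^k_r\xi^j$, $K^j_s=D^k_r g^j(s,Y_s,Z_s)$, $F^{jl}_s=\pl_{y_l}g^j(s,Y_s,Z_s)$, $G^{jl}_s=\pl_{z^{jl}}g^j(s,Y_s,Z_s)$, $V^{jl}_s=D^k_r Z^{jl}_s$. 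Assumptions (A1), (A3)--(A5) supply hypothesis (i) of Theorem \ref{t1111} (for a.e.\ $r$); (A8)--(A10) supply (ii), since $D^k_r\xi^j\geq\beta_{jk}(r)\geq0$, $D^k_r g^j\geq0$, and $\pl_{y_l}g^j\geq0$ for $l\neq j$; and (A1) supplies (iii). Moreover the coefficients $G^{jl}_s=\pl_{z^{jl}}g^j(s,Y_s,Z_s)$ do not depend on $k$ or $r$, so the Girsanov measure $\Qnu^i$ produced by Theorem \ref{t1111} depends on $i$ alone. Theorem \ref{t1111} then yields \rf{u-est-1111}; bounding $\pl_{y_i}g^i$ by a constant $C$ via (A1) (whence $e^{\int_t^T\pl_{y_i}g^i(s,Y_s,Z_s)\,ds}\geq e^{-CT}$) and discarding the nonnegative $K^i$-term, the conditional expectation collapses to the deterministic bound $D^k_r Y^i_t\geq e^{-CT}\beta_{ik}(r)=:m^k_{r,t}$, which is strictly positive on a set of $r$ of positive measure by (A8).

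For (b), read \rf{mal-1111} as a linear BSDE for the vector process $t\mapsto D^k_r Y_t$, with terminal value $D^k_r\xi$, bounded coefficients $\pl_y g$ and $\pl_z g$ (by (A1)), and free term $D^k_r g(\cdot,Y,Z)$. Representing the solution of such a BSDE through its bounded adjoint (exponential) process together with a Girsanov change of measure that removes $\pl_z g$, and then applying Cauchy--Schwarz, one obtains $|D^k_r Y_t|\leq C\big(\E[|D^k_r\xi|^2\,|\,\mc F_t]^{1/2}+\E\big[\int_t^T|D^k_r g(s,Y_s,Z_s)|^2\,ds\,\big|\,\mc F_t\big]^{1/2}\big)$. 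By (A8), $|D^k_r\xi|\leq|D_r\xi|\leq\Phi(r)$ a.s., and by (A10), $\E\big[\int_t^T|D_r g(s,Y_s,Z_s)|^2ds\,\big|\,\mc F_t\big]\leq\Phi^2(r)$, both bounds being uniform in $k$; consequently $|D_r Y^i_t|\leq|D_r Y_t|\leq C'\Phi(r)=:M_{r,t}$, which is deterministic, with $\La(t)=(C')^2\int_0^t\Phi^2(r)\,dr$ finite ($\Phi\in L_2$) and positive ($\Phi>0$). Feeding $m_{r,t}$ and $M_{r,t}$ into Corollary \ref{cor-main} produces the density $\rho_{Y^i_t}$ together with \rf{est1111} and \rf{est2222}, with the explicit choices $\la_i(t)=e^{-2CT}\int_0^t\sum_k\beta_{ik}^2(r)\,dr$ and $\La(t)=(C')^2\int_0^t\Phi^2(r)\,dr$.

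The step I expect to be the real obstacle is the identification in the second paragraph of \rf{mal-1111} with the template \rf{linear-bsde} of Theorem \ref{t1111}: this is exactly where (A7) is indispensable — without it the $i$-th equation would couple to the rows $D^k_r Z^j$ with $j\neq i$, and Theorem \ref{t1111} could not be invoked — and it is also where one must verify that the conditional-expectation expression on the right of \rf{u-est-1111} genuinely reduces to a positive \emph{deterministic} lower bound, which rests on combining the boundedness of $\pl_{y_i}g^i$ with the sign conditions $D^k_r g^i\geq0$ and $D^k_r\xi^i\geq\beta_{ik}(r)$ coming from (A8) and (A10). Obtaining the a.s.\ (rather than merely $L^2$) pointwise control of $|D_r Y^i_t|$ required by Corollary \ref{cor-main} is a secondary technical matter, handled by the linear-BSDE representation above.
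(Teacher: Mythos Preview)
Your proposal is correct and follows the same route as the paper: under (A7) the Malliavin-derivative BSDE \rf{mal-1111} takes the form \rf{linear-bsde}, Theorem \ref{t1111} yields the deterministic lower bound $D^k_rY^i_t\geq\beta_{ik}(r)\exp\{(T-t)\inf\pl_{y^i}g^i\}$ (your $e^{-CT}$ is just a slightly cruder version of this constant), a standard linear-BSDE a priori estimate produces the a.s.\ upper bound on $|D_rY_t|$, and Corollary \ref{cor-main} concludes. The only difference is in the upper-bound step: the paper applies It\^o's formula to $|D^k_rY_t|^2$, takes $\E[\,\cdot\,|\,\mc F_\tau]$, uses Gronwall, and then evaluates at $\tau=t$ to obtain $|D_rY_t|\leq\sqrt{2}\,\Phi(r)\,e^{M(T-t)/2}$, which is a more direct derivation of exactly the conditional-expectation inequality you state than your ``adjoint exponential plus Girsanov'' description (the latter being a little delicate here, since under (A7) each row of the system would require its own Girsanov shift).
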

To formulate sufficient conditions for the existence of densities  and
Gaussian-type density bounds 
for the components 
of the solution $Y_t$ to the FBSDE \rf{bsde}-\rf{fsde1}, below we present another set of assumptions,
implying (A1)--(A10) for the particular case of $g$ which
becomes random through the dependence on the forward component $X_t$
of the solution.

\bi
\item[\bf (H1)] $f: [0,T]\x \Rnu^n \to \Rnu^n$ possesses a bounded partial derivative
$\pl_x f(t,x)$.
\item[\bf (H2)] 
 $g: [0,T] \x \Rnu^n \x\Rnu^m \x \Rnu^{m\x n} \to\Rnu^m$, $g(t,x,y,z)$, possesses 
 bounded continuous partial derivatives $\pl_y g$ and $\pl_z g$, and a bounded derivative $\pl_x g$. 
 \item[\bf (H3)] $g(\fdot, 0,0,0)\in L_2([0,T])$ and for each $(y,z)$, $(t,x)\mto g(t,x,y,z)$  
 is a Borel function.
 \ei
 Note that (H1)--(H3) imply (A1) and (A2) for the BSDE \rf{bsde}. 
 Indeed, under (H1), there exists a unique $\mc F_t$-adapted continuous solution $X_t$
 to  \rf{fsde1}.
Furthermore, there exists an $\mc F_t$-adapted pair $(Y_t,Z_t)$ solving 
 \rf{bsde} and such that $Y_t$ is continuous and $Z_t$ is $\mc F_t$-predictable.
 The pair is unique with respect to the squared norm
 $\E \sup_{[0,T]}|Y_t|^2 + \E\int_0^T|Z_t|^2 dt$
  (see \cite{Karo}, Theorem 2.1).
 We further assume 
  \bi
  \item[\bf (H4)] $\sg \in L_4([0,T])$.
  \item[\bf (H5)] $\ffi\in \C^1_b(\Rnu^n\to\Rnu^m)$.   
  \item[\bf (H6)]  For all $(t,x,y_1, z_1, y_2, z_2)$, 
   \aa{
  |\pl_x g(t,x,y_1,z_1)  - \pl_xg(t,x,y_2,z_2)|
  \lt K(|y_1-y_2| + |z_1 - z_2|),
  }
  where $K$ is a constant.
    \ei
Note that $X_t\in \mathbb D^{1,2}$ (see, e.g., \cite{nualart}).
Furthermore, it is straightforward  to verify (see the proof of Corollary 
 \ref{cor1111})  that (H1)--(H6) imply (A1)--(A6). 
Therefore, according to \cite{Karo}, $Y_t,Z_t\in \mathbb D^{1,2}$ and 
  there is a version of $\{(D_r^k Y_t, D^k_r Z_t), 0\lt r, t \lt T\}$ satisfying the BSDE
  \mm{
D^k_r Y_t=  \nab \ffi(X_T)D^k_r X_T
 +  \int_t^T \big[ \pl_x g(s, X_s, Y_s, Z_s)D^k_r X_s  \\  +\pl_y g(s,X_s,Y_s,  Z_s)D^k_r Y_s  
+\pl_z g(s,X_s,Y_s,  Z_s)D^k_r Z_s\big] ds 
+ \int_t^T  D^k_r Z_s dB_s \quad  \text{if} \;\; t\gt r,
}
and $D_r^k Y_t = 0$, $D^k_r Z_t = 0$, if $t<r$.
Furthermore, $D^k_r X^l_t$'s solve the SDE (see \cite{nualart})
\aaa{
\lb{f-mal-1111}
D^k_rX^l_t = \sg_{lk}(r) + \sum_{j=1}^n\int_r^t \pl_{x_j} f^l(s,X_s) D^k_r X^j_s ds
}
(written component-wise), where $\sg_{lk}$ are the entries of the matrix $\sg$.
Finally, to guarantee upper and lower bounds for components of $D_r Y_t$, we assume
\bi
\item[\bf (H7)] Each component $g^i(t,x,y,z)$ of $g$ depends only on 
$z^i=(z^{i1}, \ldots, z^{in})$ in the last argument. 
\item[\bf (H8)] 
There exist constants $\gm_{ij}\gt 0$ such that $\pl_{x_j} \ffi^i \gt \gm_{ij}$ for 
all $i=1,\ldots, m$, $j=1,\ldots, n$.
\item[\bf (H9)]  
For all $(t,x,y,z)\in [0,T]\x \Rnu^n\x \Rnu^m \x \Rnu^{m\x n}$,
 $\pl_{y_j}g^i(t,x,y,z) \gt 0$ for $i\ne j$,  $\pl_{x_j} f^i(t,x)\gt 0$ for $i\ne j$, and
 $\pl_{x_j} g^i(t, x,y, z)\gt 0$ for all $i,j$.
 \item[\bf (H10)] $\sg_{ij}(r)\gt 0$ for all $i,j$ and
  $\sum_{j=1}^n \gm_{ij}^2 \int_0^t|\sg_j(r)|^2 dr >0$ for all $i$ and $t\in (0,T]$,
 where $\sg_j = (\sg_{j1},\ldots, \sg_{jn})$ is the $j$-th line of the matrix $\sg$.
 \ei
\begin{cor}
\lb{cor1111}
Assume (H1)-(H10) and let $Y_t$ be the second component of the
unique $\mc F_t$-adapted solution $(X_t,Y_t,Z_t)$ to the FBSDE \rf{bsde}--\rf{fsde1}
 (whose existence is known under (H1)--(H3)). Then, the conclusion
 of Theorem \ref{t-main1111} holds for $Y_t$.
 \end{cor}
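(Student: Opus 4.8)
The plan is to deduce Corollary \ref{cor1111} from Theorem \ref{t-main1111} by regarding the FBSDE \rf{bsde}--\rf{fsde1} as the particular case of \rf{bsde1111} in which the generator is the ($\om$-dependent) map $\td g(t,\om,y,z):=g(t,X_t(\om),y,z)$ and the terminal value is $\xi:=\ffi(X_T)$, and by checking that (H1)--(H10) force (A1)--(A10) for this data. I would first record the classical facts about \rf{fsde1}: under (H1) there is a unique $\mc F_t$-adapted continuous solution $X_t$ with $\sup_{[0,T]}\E|X_t|^p<\infty$ for every $p$, $X_t\in\D^{1,2}$ with a version of $D^k_r X^l_t$ solving the linear \emph{random} ODE \rf{f-mal-1111}, and $\E\sup_{[0,T]}|D_r X_t|^p<\infty$ for every $p$. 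The point I would stress is that, because all entries of $\pl_x f$ are bounded by some constant $C$, a pathwise Gronwall estimate applied to \rf{f-mal-1111} yields the \emph{deterministic} bound $|D_r X_t|\lt c\,\|\sg(r)\|\,e^{CT}$, uniformly in $t\in[r,T]$.

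Verifying the ``existence and Malliavin-differentiability'' block is then routine bookkeeping. Assumptions (H2)--(H3) and the moment bounds on $X$ give that $\td g(\fdot,0,0)=g(\fdot,X_\fdot,0,0)\in L_2([0,T]\x\Om)$, that $\td g$ is $\mc F_t$-predictable in $(t,\om)$, and that it has uniformly bounded continuous derivatives $\pl_y\td g,\pl_z\td g$, i.e.\ (A1)--(A2). The chain rule gives $D_r\td g(t,y,z)=\pl_x g(t,X_t,y,z)\,D_r X_t$ and $D_r\xi=\nab\ffi(X_T)\,D_r X_T$, which furnish the required $\mc F_t$-predictable versions; together with (H4)--(H5) and the $L_4$-bounds on $X$, $DX$ this gives (A3)--(A5), and (H6) yields $|D_r\td g(t,y_1,z_1)-D_r\td g(t,y_2,z_2)|\lt K\,|D_r X_t|\,(|y_1-y_2|+|z_1-z_2|)$, so that (A6) holds with $K(r,t,\om)=K\,|D_r X_t(\om)|\in L_4([0,T]^2\x\Om)$. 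Finally (H7) is (A7) verbatim, and (H9) gives $\pl_{y_j}\td g^i(t,y,z)=\pl_{y_j}g^i(t,X_t,y,z)\gt 0$ for $i\ne j$, which is (A9).

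The heart of the matter is (A8) and (A10), and this is where Wazewski's theorem enters. Fixing $r$ and $k$, I would view $t\mto v(t):=(D^k_r X^1_t,\ldots,D^k_r X^n_t)$ as the solution on $[r,T]$ of the linear ODE \rf{f-mal-1111}, with coefficient matrix $a_{lj}(s)=\pl_{x_j}f^l(s,X_s)$ and initial value $v(r)=(\sg_{1k}(r),\ldots,\sg_{nk}(r))$. By (H10) $v(r)\gt 0$ componentwise, and by (H9) $a_{lj}\gt 0$ for $l\ne j$, so Proposition \ref{pro14}, applied pathwise on $[r,T]$, gives $D^k_r X^l_t\gt 0$ for all $l$ and all $t\in(r,T]$, while Corollary \ref{pro24}, combined with $a_{ll}\gt -C$, sharpens this to the \emph{deterministic} lower bound $D^k_r X^l_t\gt\sg_{lk}(r)\,e^{-CT}$. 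Hence, by (H8), $D^k_r\xi^i=\sum_{l=1}^n\pl_{x_l}\ffi^i(X_T)\,D^k_r X^l_T\gt e^{-CT}\sum_{l=1}^n\gm_{il}\,\sg_{lk}(r)=:\beta_{ik}(r)\gt 0$; and since every $\gm_{il}$ and $\sg_{lk}(r)$ is non-negative, $\sum_{k=1}^n\beta_{ik}^2(r)\gt e^{-2CT}\gm_{ij}^2\,|\sg_j(r)|^2$ for the index $j$ supplied by (H10), so that $\int_0^t\sum_k\beta_{ik}^2(r)\,dr>0$. The matching upper bound $|D_r\xi|\lt\|\nab\ffi\|_\infty\,|D_r X_T|\lt\Phi(r):=c'\,\|\sg(r)\|\,e^{CT}$ is square-integrable by (H4); this establishes (A8). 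For (A10), $D^k_r\td g^i(t,y,z)=\sum_{l=1}^n\pl_{x_l}g^i(t,X_t,y,z)\,D^k_r X^l_t\gt 0$ by (H9) and the positivity of $D^k_r X^l_t$, and $|D_r\td g(s,Y_s,Z_s)|\lt\|\pl_x g\|_\infty\,|D_r X_s|\lt c''\,\|\sg(r)\|\,e^{CT}$ deterministically, so, enlarging $\Phi$ if necessary, $\E[\int_t^T|D_r\td g(s,Y_s,Z_s)|^2\,ds\mid\mc F_t]\lt\Phi^2(r)$. With (A1)--(A10) in force, Theorem \ref{t-main1111} applies and yields the density bounds and tail estimates for every component $Y^i_t$. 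I expect the only genuinely delicate step to be the one just isolated: arranging that the lower bounds $\beta_{ik}$ and the dominating function $\Phi$ are \emph{deterministic} --- so that Corollary \ref{cor-main} can be invoked --- while preserving the non-degeneracy $\int_0^t\sum_k\beta_{ik}^2>0$; everything else is a standard estimate for the forward SDE \rf{fsde1} and its Malliavin derivative.
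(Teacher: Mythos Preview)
Your proposal is correct and follows essentially the same route as the paper: both reduce to Theorem \ref{t-main1111} by checking (A1)--(A10) for $\td g(t,\om,y,z)=g(t,X_t(\om),y,z)$ and $\xi=\ffi(X_T)$, with the only substantive step being the pathwise application of Wazewski's theorem (Corollary \ref{pro24}) to the linear random ODE \rf{f-mal-1111} to obtain the deterministic componentwise lower bound $D^k_r X^l_t\gt\sg_{lk}(r)e^{-CT}$, from which the deterministic $\beta_{ik}(r)$ in (A8) and the non-negativity in (A10) follow via (H8)--(H10). The paper's proof is the same argument, only more tersely written.
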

\subsection{Gaussian-type bounds may not exist in the absence of (A7) or (H7)}
Consider the system of BSDEs \rf{exl-1111} 
which does not satisfy (A7), but satisfies the rest of
the assumptions.
Below, we show that (i) the Malliavin derivative $D_r Y^1_t$ 
does not take only positive or only negative
values  (as $\om$ varies) for a large interval of values of
$t\in (0,T)$; (ii) at some point $t\in (0,T)$,
$Y^1_t$ possesses a density
with respect to Lebesgue measure, but its density does not have Gaussian-type bounds.
Our examples show that the generator satisfying (A7) or (H7) is the most general one
for which the results for one-dimensional BSDEs (e.g.\cite{aboura, TM, OS}) can be extended
to the multidimensional case.

\paragraph{\it Example 1. $D_r Y^1_t$ takes positive and negative values.}
Consider the BSDE \rf{exl-1111} and let $T\gt 1$.
The BSDE for Malliavin derivatives \rf{mal-1111} for $t\gt r$ takes the form
\eqq{
D_rY^1_t = e^{-T} - \int_t^T D_r Z^2_s ds + \int_t^T D_r Z^1_s dB_s,\\
D_r Y^2_t = 2 + \sin B_T  + \int_t^T D_r Z^2_s dB_s.
}
In what follows, we will make use of the explicit expressions for
the conditional expectations $\E[\cos B_T\, \big| \, \mc F_t]$
and  $\E[\sin B_T\, \big| \, \mc F_t]$.  Applying It\^o's formula to
$e^{\frac{T}2} \cos B_T$ and $e^{\frac{T}2} \sin B_T$ (regarding $T$ as the time variable), 
and noticing that two terms in the resulting expressions cancel each other, we obtain 
\aa{
e^{\frac{T}2}\cos B_T = 1 - \int_0^T e^{\frac{s}2} \sin B_s dB_s; \qquad
e^{\frac{T}2}\sin B_T =  \int_0^T e^{\frac{s}2} \cos B_s dB_s.
}
Therefore, $\{e^{\frac{t}2}\cos B_t\}_{t\gt 0}$ and $\{e^{\frac{t}2}\sin B_t\}_{t\gt 0}$
are martingales. This implies
\aa{
\E[\cos B_T\, \big| \, \mc F_t] 
=  e^{\frac{t-T}2}\cos B_t; \qquad
\E[\sin B_T\, \big| \, \mc F_t] =  e^{\frac{t-T}2}\sin B_t.
}
By  Proposition 5.3 in \cite{Karo}, $D_t Y_t$ is a version of $Z_t$. Therefore, a.s.,
\aaa{
\lb{z1111}
&Z^2_t = 2 + \E[\sin B_T\,\big| \, \mc F_t]  = 2+ e^{\frac{t-T}2}\sin B_t, \\
\lb{rep1111}
& D_r Z^2_t = e^{\frac{t-T}2}\cos B_t = \E[\cos B_T\, \big| \, \mc F_t], \quad r\lt t.
}
Taking the conditional expectation $\E[\,\fdot\, \big| \mc F_t]$
in the BSDE for $D_rY^1_t$, by  \rf{rep1111}, we obtain
\aa{
D_rY^1_t = e^{-T} - \E[\cos B_T\, \big| \, \mc F_t](T-t) = 
 e^{-\frac{T}2}\big(e^{-\frac{T}2} - (T-t)e^{\frac{t}2}\cos B_t\big).
}
The above formula shows that for $t\in (0, T-e^{-\frac{T}2})$,
 $D_rY^1_t$ takes negative and positive values as $\om$ varies;
in particular, $\E[D_rY^1_t] = e^{-\frac{T}2}\big(e^{-\frac{T}2} - (T-t)\big)$ is negative.


\paragraph{\it Example 2. Absence of Gaussian-type bounds on the density of $Y^1_t$.}
Consider again the BSDE \rf{exl-1111} with $T\gt 1$.
By \rf{z1111}, 
\mm{
Y^1_t = e^{-T}B_t - (2 + \E[\sin B_T\, \big| \, \mc F_t])(T-t) \\
=  - 2(T-t)+  e^{-T}\big(B_t- e^{\frac{t+T}2}(T-t)\sin B_t\big).
}
There exists $\tau\in (0,T)$ such that $T-\tau =  e^{-\frac{\tau+T}2}$. We have
\aa{
Y^1_\tau = \al + \beta(B_\tau - \sin B_\tau), \quad \text{where} \; \;
\al = -2(T-\tau), \; \; \beta = e^{-T}. 
}
We show that the density of
$Y^1_\tau$ does not possess Gaussian-type bounds.
Note that the function $x\mto \al + \beta(x - \sin x)$ is strictly increasing.
Let $\psi$ be its inverse function and
 $p_\tau(x) = (2\pi\tau)^{-\frac12} \exp\{-\frac{x^2}{2\tau}\}$ be the Gaussian density.
 For any bounded measurable function $\ffi$, we have
\aa{
\int_{-\infty}^{+\infty} \ffi( \al + \beta(x - \sin x)) p_\tau(x) dx = 
\int_{-\infty}^{+\infty} \ffi(y) \frac{p_\tau(\psi(y))}{\beta(1- \cos \psi(y))} \, dy,
}
where the integration is understood in the Lebesgue sense and
the limits on the right-hand side can be computed from the representation
$\psi(y) = \frac{y}{\beta} - \frac{\al}{\beta} + \sin{\psi(y)}$.
Thus, the density of $Y^1_\tau$ is the function 
\aaa{
\lb{d1111}
y\mto  \frac{p_\tau(\psi(y))}{\beta(1- \cos \psi(y))}. 
}
Note that $p_\tau(\psi(y))$ possesses Gaussian-type bounds. 
However, by the above representation for $\psi$, the denominator in \rf{d1111} takes
a countable number of zero values as $y\to \pm \infty$ 
since $\psi(y)$ reaches all the values $2\pi n$, $n\in \Nnu$. 
Therefore, the density of 
$Y^1_\tau$ does not possess Gaussian bounds.

\subsection{Proofs of Theorem \ref{t-main1111} and Corollary \ref{cor1111}}
\begin{proof}[Proof of Theorem \ref{t-main1111}]
In order to estimate the density $\rho_{Y^i_t}$ by Corollary \ref{cor-main}, 
we have to prove the existence of 
non-negative lower bounds on $D^k_r Y^i_t$  and of an upper bound on $|D_r Y^i_t|$. 
Under  (A7), for each fixed $k$, the BSDE \rf{mal-1111} transforms to
\mmm{
\lb{m-bsde1111}
D^k_r Y^i_t=  D^k_r \xi^i +  \int_t^T \big[ D^k_r g^i(s, Y_s, Z_s)   + 
\sum_{j=1}^n \pl_{y^j} g^i(s,Y_s,  Z_s)D^k_r Y^j_s  \\
+\sum_{j=1}^n \pl_{z^{ij}} g^i(s,Y_s,  Z_s)D^k_r Z^{ij}_s\big] ds 
+ \sum_{j=1}^n \int_t^T  D^k_r Z^{ij}_s dB^j_s.
}
Note that the coefficients of \rf{m-bsde1111} satisfy the assumptions of Theorem \ref{t1111}. Therefore,
for all $i=1,\ldots, m$ and $k=1,\ldots, n$ it holds that
\aaa{
D^k_r Y^i_t \gt \beta_{ik}(r)  \exp\{(T-t) \inf \pl_{y^i} g^i \} = m^{ik}_{r,t},
}
For each $i$, the function $\la_i(t)$ from Corollary \ref{cor-main} can be computed as follows:
\aaa{
\lb{la1111}
\la_i(t) = \int_0^t \sum_{k=1}^n \big(m^{ik}_{r,t}\big)^2 dr = 
  \exp\{2(T-t) \inf \pl_{y^i} g^i  \} \int_0^t \Big(\sum_{k=1}^n \beta_{ik}^2(r)\Big) dr.   
}
By (A8), $\la_i(t)>0$.  
Let us prove now that  for each $k=1,\ldots, n$, $|D^k_r Y_t|$ is bounded from above a.s.
Applying It\^o's formula to 
$|D^k_r Y_t|^2$ and taking the conditional expectation $\E[\,\fdot \, \big| \, \mc F_\tau]$
for some $\tau>0$, for all $t\gt \tau$, from standard estimates and Gronwall's inequality, we obtain
that a.s.
\aa{
\E[\, |D^k_r Y_t|^2 \, \big| \, \mc F_\tau] \lt \Big(\E[\, |D^k_r \xi|^2 \, \big| \, \mc F_\tau] 
+ \int_\tau^T \E[\,|D^k_r g(t,Y_t,Z_t)|^2 \, \big| \, \mc F_\tau] dt\Big) e^{M(T-t)} 
}
for some constant $M>0$. Evaluating the above inequality at $\tau=t$, by (A8) and (A10), we obtain that
a.s.,
\aa{
|D_r Y_t| \lt \sqrt{2}\, \Phi(r) \, e^{\frac{M(T-t)}2}. 
}
Therefore, the function $\La(t)$ from 
 Corollary \ref{cor-main}  is $\La(t) = 2\, e^{M(T-t)} \int_0^t \Phi^2(r) dr$.
 The statement of the theorem follows now from the aforementioned corollary. 
\end{proof}
If $g$ does not depend on $z$ and $\om$, we obtain another bound on $|D^k_r Y_t|$
which will be used for a better estimation of tail probabilities in the gene expression model in Section \ref{gene1111}.
\begin{lem}
\lb{lem1111}
Assume $g$ does not depend on $z$ and $\om$. Then,
under the assumptions of Theorem \ref{t-main1111},
for all $t\in [0,T]$, a.s.,
\aa{
|D^k_r Y_t|   \lt  \E\Big[ \sum_{i=1}^m D^k_r \xi^i \,\big| \, \mc F_t\Big]
\exp\Big\{(T-t)\sup_{j,t,y}\sum_{i=1}^m|\pl_{y_j} g^i(t,y)|\Big\}. 
}
\end{lem}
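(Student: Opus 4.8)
The plan is to use that when $g$ depends neither on $z$ nor on $\om$, the Malliavin BSDE \rf{mal-1111} collapses to a linear BSDE system for the vector $D^k_r Y_t=(D^k_r Y^1_t,\ldots,D^k_r Y^m_t)$: the term $D^k_r g(s,Y_s,Z_s)$ vanishes because $g$ carries no explicit randomness, and the term $\pl_z g(s,Y_s,Z_s)D^k_r Z_s$ vanishes because $\pl_z g\equiv 0$. Fixing $r$ and $k$, for $t\gt r$ one works with
\aa{
D^k_r Y^i_t =  D^k_r \xi^i +  \int_t^T \sum_{j=1}^m\pl_{y^j} g^i(s, Y_s)\,D^k_r Y^j_s \, ds
+ \sum_{j=1}^n \int_t^T  D^k_r Z^{ij}_s \, dB^j_s, \qquad i=1,\ldots,m.
}

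The next step is to reduce to a scalar estimate. From the proof of Theorem \ref{t-main1111} --- which, for each fixed $k$, applies Theorem \ref{t1111} to precisely the linear system above --- one has $D^k_r Y^i_s\gt 0$ a.s. for all $i$ and $s\in[r,T]$. Hence $|D^k_r Y_t|=\big(\sum_{i=1}^m (D^k_r Y^i_t)^2\big)^{1/2}\lt S_t$, where $S_t:=\sum_{i=1}^m D^k_r Y^i_t$ is a nonnegative $\mc F_t$-adapted process. Summing the system over $i$,
\aa{
S_t = \Xi + \int_t^T h_s\, ds + \int_t^T (\zeta_s, dB_s), \qquad \Xi:=\sum_{i=1}^m D^k_r\xi^i\gt 0,
}
with $h_s=\sum_{j=1}^m\big(\sum_{i=1}^m \pl_{y^j}g^i(s,Y_s)\big)D^k_r Y^j_s$ and $\zeta^j_s=\sum_{i=1}^m D^k_r Z^{ij}_s$. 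Using the positivity $D^k_r Y^j_s\gt 0$ together with the uniform bound on $\pl_y g$ from (A1), one gets $h_s\lt C\,S_s$ with $C:=\sup_{j,t,y}\sum_{i=1}^m |\pl_{y_j} g^i(t,y)|<\infty$; write $h_s=C S_s-k_s$ with $k_s\gt 0$.

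It then remains to solve this one-dimensional linear BSDE. Applying It\^o's formula to $e^{Cs}S_s$ and observing that only the drift $e^{Cs}(CS_s-h_s)\,ds=e^{Cs}k_s\,ds$ survives,
\aa{
e^{Ct}S_t = e^{CT}\Xi - \int_t^T e^{Cs}k_s\, ds + \int_t^T e^{Cs}(\zeta_s, dB_s).
}
The stochastic integral is a true martingale, since $\E\int_0^T e^{2Cs}|\zeta_s|^2\,ds<\infty$ by the $L_2$ bound on $D^k_r Z$ in (A5), so conditioning on $\mc F_t$ removes it. Because $k_s\gt 0$ the middle term is nonnegative, and $S_t$ is $\mc F_t$-measurable, hence $e^{Ct}S_t\lt e^{CT}\,\E[\Xi\,|\,\mc F_t]$. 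Dividing by $e^{Ct}$ and using $|D^k_r Y_t|\lt S_t$ gives exactly the asserted bound, since $C$ is precisely the supremum appearing in the exponential. The only point requiring care is the a.s. sign information $D^k_r Y^i_s\gt 0$, but this is furnished by Theorem \ref{t1111} applied to the linear BSDE solved by the $D^k_r Y^i$'s once (A8)--(A10) are in force; the rest is the routine verification that $\Xi\in L_1(\Om)$ and $\zeta\in L_2([0,T]\x\Om)$, so that the conditional-expectation manipulations are legitimate, plus a standard linear-BSDE computation.
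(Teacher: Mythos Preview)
Your proof is correct and follows essentially the same route as the paper: sum the linear BSDEs for $D^k_r Y^i_t$ over $i$, use the nonnegativity $D^k_r Y^i_t\gt 0$ (from Theorem \ref{t1111}) to bound the drift linearly in $S_t=\sum_i D^k_r Y^i_t$, and conclude with $|D^k_r Y_t|\lt S_t$. The only cosmetic difference is that the paper conditions on $\mc F_\tau$ first and then applies Gronwall's inequality to the resulting integral inequality before setting $t=\tau$, whereas you use the integrating factor $e^{Cs}$ via It\^o's formula and condition on $\mc F_t$; these are equivalent ways to solve/compare a one-dimensional linear BSDE. (A minor quibble: the $L_2$ integrability of $D^k_r Z$ that justifies the martingale property comes from the Malliavin-differentiability result \cite[Prop.~5.3]{Karo} rather than from (A5) directly.)
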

\begin{proof}
Fix $\tau\in [0,T]$. Summing up the equations in system \rf{m-bsde1111} with respect to $i$
and taking the conditional expectation with respect to $\mc F_\tau$, for all $t\in [\tau,T]$,
we obtain
\mm{
\E\Big[\sum_{i=1}^m D^k_r Y^i_t \,\big| \, \mc F_\tau\Big]
= \E\big[ \sum_{i=1}^m D^k_r \xi^i  ds  \,\big| \, \mc F_\tau\big]\\
+\int_t^T
 \E\Big[\sum_{j=1}^n \Big(\sum_{i=1}^m \pl_{y^j} g^i(s,Y_s) \Big)D^k_r Y^j_s   \,\big| \, \mc F_\tau\Big] ds.
}
By the non-negativity of $D^k_rY^i_t$ and Gronwall's inequality, a.s.,
\aa{
\E\Big[\sum_{i=1}^m D^k_r Y^i_t \,\big| \, \mc F_\tau\Big]  
\lt 
\E\Big[ \sum_{i=1}^m D^k_r \xi^i   \,\big| \, \mc F_\tau\Big]
\exp\Big\{(T-t)\sup_{j,t,y}\sum_{i=1}^m|\pl_{y_j} g^i(t,y)|\Big\}. 
}
Since $|D^k_r Y_t| \lt \sum_{i=1}^m D^k_r Y^i_t$, evaluating 
the both parts of the above estimate at $t=\tau$ concludes the proof.   
\end{proof}
\begin{proof}[Proof of Corollary \ref{cor1111}]
We start by obtaining non-negative lower bounds on $D^k_rX^j_t$.
Consider equation \rf{f-mal-1111} for a fixed $k\in\{1,\ldots, n\}$ and $j$ varying from $1$ to $n$.
By Proposition \ref{pro24}, for all 
$j = 1,\ldots, n$ and for all $t\in (0,T]$,
\aa{
D^k_rX^j_t \gt  \sg_{jk}(r)\exp\big\{\int_r^t \pl_{x_j} f^j(s,X_s) ds\big\} \gt  
\sg_{jk}(r)\exp\{-KT\},
}
where $K= \sup_{[0,T]\x \Rnu^n}|\pl_x f(t,x)|$.
This is valid for all $k=1\ldots, n$.
Also,
\aaa{
\lb{x-bound1111}
|D_r X_t| \lt |\sg(r)| \exp\{K T\}.
}
Since $D^k_rX^j_t$ and $|D_r X_t|$ possess the above-mentioned lower and upper bounds,
 the verification of most of the assumptions of Theorem 
\ref{t-main1111} is straightforward. In particular, since for all $(y,z)$,
\aa{
D^k_r g(t,X_t,y,z) = \sum_{j=1}^n \pl_{x_j} g(t,X_t,y,z) D^k_r X^j_t,
}
where 
 $\pl_x g$ is bounded, then $|D^k_r g(t,X_t,y,z)|\lt C_1 |\sg(r)|$
 for some constant $C_1>0$.
Furthermore, by the boundedness of the partial derivatives of $g$, 
there exists a consatnt $C_2>0$ such that
\aa{
|g(t,x,y,z)| \lt |g(t,0,0,0)| + C_2(|x| + |y| + |z|).
}
The above arguments, together with the fact that $X_t$ possesses moments of
all orders, imply (A1)--(A6) and (A10). Furthermore, (A3) holds because
for all $(y,z)$, $(t,x)\mto \pl_x g(t,x,y,z)$ is a Borel function.
Let us show now that assumption (A8) is fulfilled. 
We have
\aa{
D_r^k \ffi^i(X_T) = \sum_{j=1}^n \pl_{x_j} \ffi^i(X_T) D^k_r X^j_T
\gt e^{-KT} \sum_{j=1}^n \gm_{ij} \sg_{jk}(r) = \beta_{ik}(r),
}
where $\beta_{ik}(r)$ are defined by the last expression. By (H10),
\aa{
\int_0^t \Big(\sum_{k=1}^n \beta_{ik}^2(r)\Big) dr \gt e^{-2KT} 
 \sum_{j=1}^n\gm^2_{ij} \int_0^t  \Big(\sum_{k=1}^n \sg_{jk}^2(r)\Big) dr >0.
}
On the other hand,
\aa{
|D_r\ffi(X_T)| \lt |\nab \ffi(X_T)| |D_r X_T|\lt  |\sg(r)| \sup_x |\nab \ffi(x)| e^{K T}.
}
This implies (A8).
\end{proof}
\begin{rem}
\rm
If the forward SDE is one-dimensional, then
the diffusion coefficient $\sg$ may also depend on $X_t$, i.e.,
the SDE can take the form
$X_t = x + \int_0^t f(s,X_s) ds + \int_0^t \sg(s,X_s) dB_s$. In this case,
deterministic non-negative lower and upper bounds for
$D_r X_t$ can be obtained   
by means of the Lamperti tranform (see, e.g., \cite{OS}).
\end{rem}
\section{Application to gene expression}
\lb{gene1111}
In \cite{gene}, the authors proposed a BSDE approach to model protein level dynamics
for a gene regulatory network. Distributions of proteins, generated by the genes of the network, 
were represented in the form of histograms which resembled Gaussian-type densities. 
Here, we aim to prove that
under certain assumptions on the parameters of the model, the distributions of proteins
indeed possess densities with respect to Lebesgue measure. Moreover, we will use the results of the previous section to obtain upper and lower Gaussian-type bounds  on these densities.
Furthermore, we compute
prediction intervals for expressions of individual genes
 and show the positivity of amounts of proteins.
Finally, we demonstrate how Gaussian-type bounds can be used as a validation tool of the gene expression
model. 
In Subsection \ref{precise1111}, we present a reasonable biological model 
where upper and lower density bound curves are relatively close to each other
and show that these bounds agree with the density profile  of benchmark data obtained
by Gillespie's stochastic simulation algorithm (SSA) for modeling of biochemical reactions \cite{Gil}.
\subsection{Brief description of the gene expression model}
Let $\eta_t = (\eta^1_t, \ldots, \eta^n_t)$ denote a vector whose $i$-th component 
is the amount of protein generated by gene $i$. According to \cite{gene}, the dynamics of $\eta_t$ is described
by the BSDE
\aaa{
\lb{bsde2}
\eta_t = \eta_T - \int_t^T f(\eta_s)\, ds -  \int_t^T z_s\, dB_s, \;
\; t\in [0,T].
}
In \rf{bsde2}, 
$B_t$ is an $n$-dimensional Brownian motion,  
$\eta_T$ is a given final data (obtained through a simulation using Gillespie's SSA), and
the $i$-th component of $f$ is the synthesis/degradation rate
of the $i$-th gene which takes the form
\aaa{
\lb{sdrate}
f^i(\eta) = \frac{\nu_i}{1+\exp(-\Te_i)} - \rho_i \eta^i,
}
where $\Te_i =\sum_{j=1}^n A_{ij} \eta^j$
has the meaning of the total regulatory input to gene $i$ by other genes of the network.
In particular,
$A_{ij}\eta^j$ represents the regulatory effect of gene $j$ to gene $i$ with
$A_{ij}$ being the strength of this regulation.
Each element $A_{ij}$  can be
negative, positive, or equal zero, indicating repression, activation, or
non-regulation, respectively, of gene $i$ by gene $j$. Furthermore,
$\nu_i$ denotes the maximum synthesis rate of the $i$-th protein, while the same
protein degrades at rate $\rho_i\eta^i$. The final condition $\eta_T$, determined through a simulation
by using Gillespie's SSA, is  represented in the form
$h(B_T)$, where the function $h:\Rnu^n\to\Rnu^n$ was found
in such a way that $h(B_T)$ matches $\eta_T$.
According to \cite{gene}, $h$ looks like a linear function.

The BSDE \rf{bsde2} was solved by means of the associated final value problem for the PDE
\aaa{
\lb{pde-model}
\begin{cases}
\pl_t \te(t,x) + \frac12\lap_x\te(t,x) - f(\te(t,x)) = 0,\\
\te(T,x) = h(x), \quad x\in \Rnu^n,
\end{cases}
}
and the solution $\eta_t$ to \rf{bsde2}  was computed as
$\te(t,B_t)$ by means of generating multiple Brownian motion paths. 

\subsection{Bounds on the density and tail probabilities}
\lb{62-1111}
In what follows, we show how the results obtained in the previous section can be applied to
estimate the protein level of a gene in a gene regulatory network. 
\begin{thm}
\lb{thm1111}
Let $\eta_t$ be the solution to the BSDE \rf{bsde2},
with the final data $\eta_T = h(B_T)$. Assume that the partial derivatives of
$h$ satisfy the condition 
$\gm_{ik} \lt \pl_{x_k} h^i \lt \Gm_{ik}$, where $\Gm_{ik}$ and $\gm_{ik}\gt 0$ are constants
with the property $\max_k \gm_{ik}>0$ for each $i$. 
Further assume that the rate function $f$ is given by \rf{sdrate} with $A_{ij}\lt 0$ for $i\ne j$.
Then, each component $\eta^i_t$ of $\eta$
has a density $\rho_{\eta^i_t}$ w.r.t. Lebesgue measure.
Moreover, etimates \rf{est1111} and \rf{est2222} hold with $Y^i_t = \eta^i_t$
and $\la_i(t)$, $\La(t)$ given by the expressions
\aaa{
\lb{lai1111}
&\la_i(t) = \begin{cases}
 |\gm_i|^2 t\exp\{2(T-t)\rho_i\} \; \; \text{if} \; A_{ii}\lt 0,\\
 |\gm_i|^2 t\exp\!\big\{2(T-t)\big(\rho_i- \frac{A_{ii}\nu_i}4\big)\big\}   
 \;\; \text{if} \; A_{ii} > 0,
 \end{cases}  \\
 \lb{La1111}
 &\La(t) = t \sum_{k=1}^n \Big(\sum_{i=1}^n \Gm_{ik}\Big)^2
\exp\{2(T-t) \max_j P_j\},
}
where $\gm_i = (\gm_{i1},\ldots, \gm_{in})$ and
\aaa{
\lb{Pj1111}
P_j = \sum_{i\ne j} \frac{\nu_i|A_{ij}|}4 + \max\{\rho_j, |\sfrac{A_{jj}\nu_j}4 - \rho_j|\}.
}
\end{thm}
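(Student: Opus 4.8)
The plan is to exhibit \rf{bsde2} as a special case of the system \rf{bsde1111} and then apply Theorem \ref{t-main1111} and Lemma \ref{lem1111}. Set $\xi^i = h^i(B_T)$, $Z_s=-z_s$, $g^i(s,y,z)=-f^i(y)$ with $f^i$ as in \rf{sdrate}; this generator is deterministic, $\om$-free, and independent of $z$, so (A7) is automatic. (One could instead route through Corollary \ref{cor1111} with $X_t=B_t$, but the direct route is cleaner since $g$ has no explicit randomness.) The first task is to check (A1)--(A10). With $\sg(u)=(1+e^{-u})^{-1}$ and $\Te_i=\sum_l A_{il}y^l$, one has $\pl_{y_j}g^i(y) = -\nu_i A_{ij}\,\sg'(\Te_i) + \rho_i\,\dl_{ij}$, where $0<\sg'(u)=e^{-u}(1+e^{-u})^{-2}\lt\tfrac14$, with $\sg'(0)=\tfrac14$ and $\sg'(u)\to0$ as $|u|\to\infty$. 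Boundedness and continuity of $\pl_y g$ give (A1); (A2)--(A6) are immediate because $g$ is $\om$-free and $h$ is Lipschitz with linear growth, so $h(B_T)\in L_4(\Om)\cap\D^{1,2}$; (A9) holds since for $i\ne j$, $\pl_{y_j}g^i=-\nu_i A_{ij}\sg'(\Te_i)\gt0$ by $A_{ij}\lt0$. For (A8) and (A10): $D^k_r B^j_T=\dl_{jk}$ gives $D^k_r\xi^i=\pl_{x_k}h^i(B_T)$, hence $\gm_{ik}\lt D^k_r\xi^i\lt\Gm_{ik}$ a.s., and taking $\beta_{ik}(r)=\gm_{ik}$, $\Phi(r)=(\sum_{i,k}\Gm_{ik}^2)^{1/2}$, with $\int_0^t\sum_k\beta_{ik}^2(r)\,dr = t|\gm_i|^2>0$ (by $\max_k\gm_{ik}>0$) and $D^k_r g^i\equiv0$, both follow. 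Theorem \ref{t-main1111} then gives the density of $\eta^i_t$ and estimates \rf{est1111} and \rf{est2222}; it remains to make $\la_i(t)$ and $\La(t)$ explicit.

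For $\la_i(t)$ I would use formula \rf{la1111} from the proof of Theorem \ref{t-main1111}, which with $\beta_{ik}(r)=\gm_{ik}$ becomes $\la_i(t) = t\,|\gm_i|^2\exp\{2(T-t)\inf_y\pl_{y^i}g^i(y)\}$. Since $\pl_{y^i}g^i(y) = \rho_i - \nu_i A_{ii}\,\sg'(\Te_i)$ and $\sg'(\Te_i)$ ranges over $(0,\tfrac14]$ as $y$ varies (and equals $\tfrac14$ identically when the $i$-th row of $A$ vanishes, in which case $A_{ii}=0$ and the formula below still applies), the infimum equals $\rho_i$ when $A_{ii}\lt0$ (the infimum $0$ of $-\nu_i A_{ii}\sg'$ being approached but not attained) and $\rho_i - \tfrac{A_{ii}\nu_i}{4}$ when $A_{ii}>0$ (attained at $\Te_i=0$). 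This is exactly \rf{lai1111}.

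For $\La(t)$ I would invoke Lemma \ref{lem1111}, which is applicable precisely because $g$ depends on neither $z$ nor $\om$. Its right-hand side needs two estimates. First, $\E[\sum_i D^k_r\xi^i\,|\,\mc F_t] = \E[\sum_i\pl_{x_k}h^i(B_T)\,|\,\mc F_t]\lt\sum_i\Gm_{ik}$. Second, for each fixed $j$, $\sum_i|\pl_{y_j}g^i(y)| = \sum_{i\ne j}\nu_i|A_{ij}|\,\sg'(\Te_i) + |\rho_j - \nu_j A_{jj}\,\sg'(\Te_j)|$, and bounding each $\sg'(\Te_i)$ for $i\ne j$ by $\tfrac14$ and maximizing the affine map $u\mapsto|\rho_j - \nu_j A_{jj}u|$ over $u\in[0,\tfrac14]$ at an endpoint gives $\sup_y\sum_i|\pl_{y_j}g^i(y)|\lt \sum_{i\ne j}\tfrac{\nu_i|A_{ij}|}{4} + \max\{\rho_j, |\tfrac{A_{jj}\nu_j}{4}-\rho_j|\} = P_j$, hence $\sup_{j,t,y}\sum_i|\pl_{y_j}g^i(t,y)|\lt\max_j P_j$. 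Lemma \ref{lem1111} then yields $|D^k_r\eta_t|\lt(\sum_i\Gm_{ik})\,e^{(T-t)\max_j P_j}$ a.s., so $|D_r\eta_t|^2 = \sum_k|D^k_r\eta_t|^2\lt\sum_k(\sum_i\Gm_{ik})^2\,e^{2(T-t)\max_j P_j}$; taking $M_{r,t}$ to be the square root of the right-hand side, the function $\La(t) = \int_0^t M_{r,t}^2\,dr$ is exactly \rf{La1111}, and Corollary \ref{cor-main} (already invoked inside Theorem \ref{t-main1111}) delivers the stated density and tail bounds. The only non-routine parts are the two extremizations over the logistic derivative $\sg'$, together with the case distinction on the sign of $A_{ii}$ and the observation that, for $\La(t)$, bounding each $\sg'(\Te_i)$ separately by $\tfrac14$ is harmless because only an upper bound, not the sharp one, is needed.
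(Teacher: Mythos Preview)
Your proposal is correct and follows essentially the same route as the paper's own proof: verify the hypotheses of Theorem \ref{t-main1111}, compute $\la_i(t)$ via formula \rf{la1111} by extremizing $\pl_{y^i}g^i = \rho_i - \nu_i A_{ii}\sg'(\Te_i)$ over the range $(0,\tfrac14]$ of $\sg'$, and obtain $\La(t)$ from Lemma \ref{lem1111} by bounding $\sum_i|\pl_{y_j}g^i|$ termwise to get $P_j$. The only difference is cosmetic: the paper writes everything in terms of $f$ and $\psi=\sg'$ rather than $g=-f$, and simply asserts that the assumptions of Theorem \ref{t-main1111} hold, whereas you spell out the sign convention $g=-f$, $Z=-z$ and check (A1)--(A10) individually.
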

 \begin{proof}
 First, we note that 
 \aaa{
 \lb{psi1111}
 \pl_{\eta^j} f^i(\eta) = \psi(\Te_i) \nu_i A_{ij} - \rho_i \dl_{ij}, \;  \text{where} \; \;
 \psi(x) = \Big(\frac1{1+e^{-x}}\Big)' \! = \frac1{2(\cosh(x) + 1)}.
 }
 Furthermore, for all $r\in [0,T]$,
 \aa{
 D^k_r h^i(B_T) = \pl_{x_k} h^i(B_T) \quad \text{and}
 \quad D_rh(B_T) = \nab h(B_T).
 }
We are in the assumptions of Theorem \ref{t-main1111}, so we conclude that
each component $\eta^i$ has the density w.r.t. Lebesgue measure possessing
upper and lower Gaussian-type bounds. 
Let us compute functions $\la_i(t)$ and $\La(t)$ from Theorem \ref{t-main1111}.
By \rf{la1111},
\aa{
 \la_i(t) =  |\gm_i|^2 t \exp\{-2(T-t) \sup \pl_{\eta^i} f^i(\eta) \} 
}
which implies \rf{lai1111}.
Furthermore, by Lemma \ref{lem1111},
\mm{
|D^k_r \eta_t|   \lt  \E\Big[ \sum_{i=1}^n \pl_{x_k}h^i(B_T)   \,\Big| \, \mc F_t\Big]
\exp\Big\{(T-t)\sup_{j,\eta}\sum_{i=1}^m|\pl_{\eta^j} f^i(\eta)|\Big\} \\  \lt
\Big(\sum_{i=1}^n \Gm_{ik}\Big) 
\exp\Big\{(T-t) \max_j P_j\Big\}, 
}
where $P_j$ is given by \rf{Pj1111}. Recall that 
$\La(t) = \int_0^t M_{r,t}^2 dr$,
where $M_{r,t}$ is an upper bound for $|D_r \eta_t|$ (see Corollary \ref{cor-main}). 
This immediately implies \rf{La1111}.
\end{proof}
The following corollary is useful for numerical computations in the gene expression model
in Subsection \ref{numsec}.
\begin{cor}
Let the rate function for the gene expression model is as in Theorem \ref{thm1111}.
Further let the final data are Gaussian and given by
$h^i_T(B_T) = c_iB^i_T + b_i$, where $c_i,b_i>0$, $i=1,\ldots, n$. Then, the function  
$\La(t)$ is computed as follows:
\aa{
\La(t) = t  \exp\{2(T-t) \max_j P_j\} \sum_{i=1}^n c_i^2,
}
where $P_j$ is given by \rf{Pj1111}.
\end{cor}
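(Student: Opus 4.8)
The plan is to specialize the formula \rf{La1111} for $\La(t)$ obtained in Theorem \ref{thm1111} to the affine terminal condition $h^i(x) = c_i x^i + b_i$. First I would verify that all hypotheses of Theorem \ref{thm1111} are in force: the assumption ``the rate function \ldots is as in Theorem \ref{thm1111}'' carries over the form \rf{sdrate} of $f$ together with $A_{ij}\lt 0$ for $i\ne j$, while the affine choice of $h$ gives the required bounds on its partial derivatives. Indeed, since $h^i$ depends only on the $i$-th coordinate, $\pl_{x_k} h^i(x) = c_i\dl_{ik}$ is constant, so one may take $\gm_{ik} = \Gm_{ik} = c_i\dl_{ik}$; then $\gm_i = (0,\ldots,0,c_i,0,\ldots,0)$, so $|\gm_i|^2 = c_i^2 > 0$, i.e. $\max_k \gm_{ik} = c_i > 0$ for every $i$. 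Hence Theorem \ref{thm1111} applies and estimates \rf{est1111}--\rf{est2222} hold with $Y^i_t = \eta^i_t$.

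Next I would substitute $\Gm_{ik} = c_i\dl_{ik}$ into \rf{La1111}. For each fixed $k$ we get $\sum_{i=1}^n \Gm_{ik} = \sum_{i=1}^n c_i\dl_{ik} = c_k$, whence $\sum_{k=1}^n \big(\sum_{i=1}^n \Gm_{ik}\big)^2 = \sum_{k=1}^n c_k^2 = \sum_{i=1}^n c_i^2$. Since the quantities $P_j$ in \rf{Pj1111} involve only $\nu_i$, $A_{ij}$ and $\rho_j$ and are therefore untouched by the choice of terminal data, plugging the above into \rf{La1111} yields $\La(t) = t\,\exp\{2(T-t)\max_j P_j\}\sum_{i=1}^n c_i^2$, which is exactly the claimed expression.

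There is no real obstacle here; the proof is a one-line substitution. The only point deserving a word of care is that the degenerate choice $\gm_{ik} = 0$ for $i\ne k$ must be admissible in Theorem \ref{thm1111} — and it is, because that theorem only needs the lower bounds $\pl_{x_k} h^i \gt \gm_{ik}$ together with $\max_k \gm_{ik} > 0$ (equivalently $|\gm_i|^2 > 0$, here $= c_i^2$), which is precisely what guarantees $\la_i(t) > 0$ and hence a nondegenerate lower density bound in \rf{est1111}. Alternatively, one could note that for affine $h$ the Malliavin derivatives of the forward and backward components are computed directly and reproduce the same $\la_i(t)$ as in \rf{lai1111}, but invoking Theorem \ref{thm1111} as a black box is the shortest route.
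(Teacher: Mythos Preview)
Your proposal is correct and is exactly the argument the paper has in mind: the corollary is stated without proof in the paper, and your one-line substitution of $\gm_{ik}=\Gm_{ik}=c_i\dl_{ik}$ into \rf{La1111}, yielding $\sum_{i}\Gm_{ik}=c_k$ and hence $\sum_k c_k^2$, is precisely the intended specialization of Theorem \ref{thm1111}.
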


\subsubsection{Prediction intervals and positivity of $\eta^i_t$}
\lb{method1111}
We have the following corollary of Theorem \ref{thm1111}.
\begin{cor}
\lb{cor-pred-1111}
Under the assumptions of Theorem \ref{thm1111}, the following estimate
holds for the $\al\! \cdot \!100\%$ prediction interval, where $0<\al<1\!:$
\aaa{
\lb{pred1111}
\PP \big\{\eta^i_t \in \big(\E\eta^i_t - x_\al, \E\eta^i_t+ x_\al\big)\big\}\gt \al,
}
where $x_\al = \sqrt{2\La(t) \ln\Big(\frac2{1-\al}\Big)}$ and $\La(t)$ is given by
\rf{La1111}.
\end{cor}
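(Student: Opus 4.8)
The plan is to read off \rf{pred1111} directly from the two-sided tail estimates \rf{est2222} that were already established in Theorem \ref{thm1111}; no new probabilistic input is needed. First I would record that $x_\al=\sqrt{2\La(t)\ln\big(2/(1-\al)\big)}$ is a well-defined strictly positive number: since $0<\al<1$ we have $2/(1-\al)>1$, hence $\ln\big(2/(1-\al)\big)>0$, while $\La(t)>0$ by \rf{La1111}. Thus the estimates \rf{est2222} of Theorem \ref{thm1111} apply with $Y^i_t=\eta^i_t$ and with the choice $x=x_\al$.

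Next I would observe that the complement of the open interval $\big(\E\eta^i_t-x_\al,\ \E\eta^i_t+x_\al\big)$ is exactly the union of the two half-lines $\{\eta^i_t\gt\E\eta^i_t+x_\al\}$ and $\{\eta^i_t\lt\E\eta^i_t-x_\al\}$ appearing in \rf{est2222}, so by subadditivity of $\PP$ followed by \rf{est2222},
\[
\PP\big\{\eta^i_t\notin\big(\E\eta^i_t-x_\al,\ \E\eta^i_t+x_\al\big)\big\}
\lt \PP\big(\eta^i_t\gt\E\eta^i_t+x_\al\big)+\PP\big(\eta^i_t\lt\E\eta^i_t-x_\al\big)
\lt 2\exp\Big(\!-\frac{x_\al^2}{2\La(t)}\Big).
\]
Then I would substitute the explicit value of $x_\al$: by construction $x_\al^2/(2\La(t))=\ln\big(2/(1-\al)\big)$, so the right-hand side equals $2\cdot\frac{1-\al}{2}=1-\al$. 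Taking complements yields $\PP\big\{\eta^i_t\in\big(\E\eta^i_t-x_\al,\ \E\eta^i_t+x_\al\big)\big\}\gt\al$, which is \rf{pred1111}.

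I do not anticipate any genuine obstacle here, as the statement is a routine quantitative corollary of the Gaussian tail bounds; the only point requiring a line of care is the endpoint bookkeeping, namely that the open prediction interval's complement is covered exactly (not merely up to a null set) by the two closed half-line events in \rf{est2222}, which is immediate. If desired, one can also note $x_\al$ is the smallest radius for which the union bound argument delivers confidence level $\al$, but this optimality remark is not needed for the stated inequality.
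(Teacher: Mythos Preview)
Your proposal is correct and follows essentially the same route as the paper: both arguments invoke the tail bounds \rf{est2222} from Theorem \ref{thm1111} with $Y^i_t=\eta^i_t$, pass to the complement of the prediction interval, bound it by $2\exp\big(-x^2/(2\La(t))\big)$, and then solve $1-2\exp\big(-x_\al^2/(2\La(t))\big)=\al$ for $x_\al$. Your added checks that $x_\al>0$ and the endpoint bookkeeping are minor elaborations of what the paper leaves implicit.
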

\begin{proof}
Using formulas \rf{est2222} for tail probabilities, we can evaluate the probability for a prediction interval
from above. Namely, for all $x>0$, 
\mm{     
\PP \big\{\eta^i_t \in \big(\E\eta^i_t - x, \E\eta^i_t+ x\big)\big\} =
1- \PP \big\{\eta^i_t \gt \E\eta^i_t+x\big\} - \PP \big\{\eta^i_t \lt \E\eta^i_t-x\big\}\\
\gt 1- 2 \exp\Big(-\frac{x^2}{2\Lambda(t)}\Big) = \al.
}
Expressing $x_\al$ from the last equation, we obtain \rf{pred1111}.
\end{proof}
\begin{rem}
\rm
In gene expression models, we usually deal with an ensemble of cells. 
In our model, each cell contains a gene that expresses the $i$-th protein.
The result of Corollary \ref{cor-pred-1111} implies that for at least $\al \cdot 100\%$ of cells, 
the amount of the $i$-th protein, which we denote by $\eta^i_t$, lies inside the interval
$\big(\E\eta^i_t - x_\al, \E\eta^i_t+ x_\al\big)$.
\end{rem}
One can use the second inequality in \rf{est2222} for showing the positivity of $\eta^i_t$'s.
\begin{cor}
\lb{cor+1111}
Under the assumptions of Theorem \ref{thm1111}, 
\aaa{
\lb{+1111}
\PP \{\eta^i_t \lt 0\} \lt \exp\Big(-\frac{ (\E \eta^i_t )^2}{2\La(t)}\Big).
}
\end{cor}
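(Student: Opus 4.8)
The plan is to apply the second tail-probability estimate in \rf{est2222} — established in Theorem \ref{thm1111} for the components $\eta^i_t$ of the solution to \rf{bsde2} — with a judicious choice of the free parameter $x$. Recall that \rf{est2222} gives, for every $x>0$,
\[
\PP\big(\eta^i_t \lt \E\eta^i_t - x\big) \lt \exp\Big(-\frac{x^2}{2\La(t)}\Big),
\]
with $\La(t)$ as in \rf{La1111}. The point is simply that the event $\{\eta^i_t \lt 0\}$ coincides with $\{\eta^i_t \lt \E\eta^i_t - x\}$ for the specific value $x = \E\eta^i_t$, \emph{provided} this value is admissible, i.e. provided $\E\eta^i_t \gt 0$ (and strictly positive so that $x>0$ as required in the hypothesis of \rf{est2222}).

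So the first step is to observe that $\E\eta^i_t \gt 0$. This should follow from the modelling setup: under the assumptions of Theorem \ref{thm1111} the final data are $\eta^i_T = h^i(B_T)$ with $\pl_{x_k}h^i \gt \gm_{ik} \gt 0$, and the rate function \rf{sdrate} together with $A_{ij}\lt 0$ for $i\ne j$ puts us in the Wazewski-type framework of Section 3, which yields componentwise lower bounds of the form \rf{u-est-1111} — in particular each $\eta^i_t$ is bounded below by a strictly positive quantity, hence has strictly positive expectation. Once $\E\eta^i_t \gt 0$ is in hand, I substitute $x = \E\eta^i_t$ into the displayed tail bound to obtain exactly \rf{+1111}:
\[
\PP\{\eta^i_t \lt 0\} = \PP\big(\eta^i_t \lt \E\eta^i_t - \E\eta^i_t\big) \lt \exp\Big(-\frac{(\E\eta^i_t)^2}{2\La(t)}\Big).
\]

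The argument is essentially a one-line specialization, so there is no serious obstacle; the only point requiring a word of justification is the positivity of $\E\eta^i_t$, which I would either cite from the lower Gaussian-type bound already furnished by Theorem \ref{thm1111} and Corollary \ref{cor-main} (the density $\rho_{\eta^i_t}$ is supported on a set forcing a strictly positive mean, given the strictly positive lower bound $m^{ik}_{r,t}$ on the Malliavin derivatives) or derive directly from estimate \rf{u-est-1111} applied to the linearized BSDE for $\eta_t$. If one wants to be fully self-contained, a short remark that $\E\eta^i_t>0$ under the stated hypotheses — and hence that $x=\E\eta^i_t$ is a legitimate choice in \rf{est2222} — completes the proof.
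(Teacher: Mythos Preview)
Your proposal is correct and matches the paper's approach: the paper states Corollary \ref{cor+1111} without proof, merely indicating beforehand that ``one can use the second formula in \rf{est2222}'' --- precisely the substitution $x=\E\eta^i_t$ into the lower-tail bound that you carry out. Your added remark that this requires $\E\eta^i_t>0$ is a point the paper leaves implicit (it is taken for granted in the gene-expression context, where $\eta^i_t$ is a protein amount); note, however, that neither of your two suggested justifications for this positivity is quite right as stated --- the Gaussian-type density bounds are centered at $\E\eta^i_t$ and say nothing about its sign, and \rf{u-est-1111} applies to linear BSDEs, not directly to the nonlinear BSDE \rf{bsde2} --- so if you want a fully rigorous version you should either add $\E\eta^i_t>0$ as a standing hypothesis or verify it by a direct computation from \rf{bsde2} (e.g.\ by noting that $f^i(\eta)\lt \nu_i$ gives $\E\eta^i_t\gt e^{\rho_i(T-t)}\E h^i(B_T) - \nu_i(T-t)$, which is positive for the parameter ranges of interest).
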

Note that 
it is not possible to prove that $\eta^i_t$'s are always positive because
this would mean that their densities do not have Gaussian-type bounds. The only
way to show this positivity (for the given parameters of the model)
 is to compute the probability on the right-hand side of \rf{+1111} and to see whether
 it is negligibly small. 

See Subsections \ref{pr1111}  and \ref{positivity1111}
for numerical results related to this subsection. 
\subsubsection{Precise Gaussian-type bounds for a simplified model}
\lb{precise1111}
In general, curves representing bounds for the density are not 
close to each other, so there is not too much information about the real density curve. 
This happens because obtaining density  bounds is related to obtaining upper and lower
bounds for the function $\ffi_F$ defined by \rf{ffiF}.
In some cases, however, it is possible to find more precise Gaussian-type density bounds.
This, in particular, allows to see how experimental data (in our case, numerical data obtained
by Gillespie's method) fit between the curves. 

We describe a gene regulatory network where
we aim to suppress expression of a certain gene, say gene 1.
The need  to lower expression of specific genes may arise in disease treatment,
such as cancer, neurodegenerative diseases, or viral infections \cite{cox, ghosh, lee, li, rinaldi}.
Suppression of gene expression, known as gene silencing or gene knockout, 
in practice can be achieved by an antisense therapy \cite{miller, rinaldi} (when single-stranded short synthetic DNA molecules are delivered inside the cell), 
genomic editing \cite{cox, li}, or administration of antibodies targeting virus gene expression \cite{lee}.

We model suppression of gene expression by introducing a gene regulatory network, where
 genes $2, \ldots, n$ repress gene 1, while the latter does not regulate the other genes. This means that $A_{1i}<0$ for
all $i=2, \ldots, n$. In addition, we assume that each gene activates itself and that genes $2, \ldots, n$
do not regulate each other. This implies that $A_{ii}>0$ for all $i$ and $A_{ij} = 0$ if $i\ne j$ and $i\ne 1$.

By Theorem \ref{thm1111}, for the model described above, 
one can find bounds on the density of the distribution of the first (targeted) protein. Furthermore,
for a sufficiently large class of the parameters of the model, 
one can reasonably estimate the density profile of gene 1.
See Subsection \ref{plots1111} 
for numerical results, diagrams, and figures
based on the results of this subsection. 
\begin{thm}
\lb{thm5}
Let the coefficients $A_{ij}$ and the rate function $f$ be as described above.
Then, the first component $\eta^1_t$ of the solution $\eta_t$ to the BSDE \rf{bsde2}
with the final data $\eta^i_T = c_iB^i_T+b_i$, $i=1,\ldots, n$, where $c_i,b_i>0$, has a density $\rho_{\eta^1_t}$ w.r.t. Lebesgue measure.
Moreover, it holds that
\aa{     
\frac{\E |\eta^1_t-\E \eta^1_t|}{2t M_t^2} \exp\Big(\!\!-\frac{\big(x-\E \eta^1_t\big)^2}{2tm_t^2}\Big) \lt 
\rho_{\eta^1_t}(x) \lt \frac{\E |\eta^1_t-\E \eta^1_t|}{2tm_t^2} \exp\Big(\!\!-\frac{\big(x-\E \eta^1_t\big)^2}{2tM_t^2}\Big),
}
where 
\aa{
m_t =  c_1 e^{(\rho_1-\frac{\nu_1 A_{11}}4)(T-t)},  \quad
M_t = e^{\rho_1(T-t)} \sqrt{c_1^2 + \sum_{k=2}^n(\kappa^k_t)^2}
 }
with  $\kappa^k_t = \frac{\nu_1 |A_{1k}| c_k}4 e^{\rho_k(T-t)}(T-t)$.
 
\end{thm}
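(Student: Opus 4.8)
The plan is to view Theorem \ref{thm5} as the special case of Theorem \ref{thm1111} obtained with the final data $h^i(x)=c_ix_i+b_i$ and the ``arrow''-shaped interaction matrix $A$ described above, the only genuinely new point being that this structure permits a much sharper upper bound on the Malliavin derivative than the generic $\La(t)$ of \rf{La1111}. First I would observe that the hypotheses of Theorem \ref{thm1111} are met here: $\pl_{x_k}h^i=c_i\dl_{ik}$, and for $i\ne j$ one has $A_{ij}\lt 0$ (the only nonzero such entries being $A_{1j}<0$, $j\gt2$), so $\eta^1_t\in\D^{1,2}$ is $\mc F_t$-measurable and has a density; moreover \rf{lai1111}, with $A_{11}>0$ and $|\gm_1|^2=c_1^2$, gives $\la_1(t)=c_1^2\,t\,e^{2(T-t)(\rho_1-\nu_1A_{11}/4)}=t\,m_t^2$, which already yields the lower density bound and the factor $\exp\!\big(-(\,\cdot\,)^2/2tm_t^2\big)$. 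It remains to prove the pointwise bound $|D_r\eta^1_t|\lt M_t$ a.s.

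The crux is a structural observation about the Malliavin BSDE \rf{m-bsde1111} for the vector $\{D^k_r\eta^i_t\}_{i=1}^n$ (with $k,r$ fixed): since the generator of \rf{bsde2} is $-f$, deterministic and $z$-free, \rf{m-bsde1111} reduces to a linear system whose coefficient matrix acting on $(D^k_r\eta^j_t)_j$ is $\{-\pl_{\eta^j}f^i(\eta_t)\}_{i,j}$, and this matrix has a full first row but is \emph{diagonal} in rows $2,\dots,n$, because genes $2,\dots,n$ neither regulate gene~$1$ (so $A_{i1}=0$) nor one another (so $A_{ij}=0$ for $i\ne j$, $i,j\gt2$). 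Hence for each $i\gt2$ the process $D^k_r\eta^i_t$ solves a \emph{closed} scalar linear BSDE with terminal value $c_i\dl_{ik}$ and coefficient $a^i_s=-\pl_{\eta^i}f^i(\eta_s)=\rho_i-\psi(\Te_i(\eta_s))\nu_iA_{ii}\in[\rho_i-\tfrac{\nu_iA_{ii}}4,\rho_i)$ by \rf{psi1111}; the exponential substitution already used in the proof of Theorem \ref{t1111} (multiply by $e^{\int_0^ta^i_s\,ds}$, recognise a martingale, condition on $\mc F_t$) gives $D^k_r\eta^i_t=c_i\dl_{ik}\,\E\big[e^{\int_t^Ta^i_s\,ds}\,\big|\,\mc F_t\big]$, so $D^k_r\eta^i_t=0$ for $k\ne i$ and $0<D^i_r\eta^i_t\lt c_ie^{\rho_i(T-t)}$.

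Substituting this back into the first coordinate of \rf{m-bsde1111} makes the sum over $j\gt2$ collapse to at most one term. For $k=1$ the equation for $D^1_r\eta^1_t$ is homogeneous with terminal value $c_1$, hence $D^1_r\eta^1_t=c_1\,\E\big[e^{\int_t^Tb_s\,ds}\,\big|\,\mc F_t\big]$ with $b_s=-\pl_{\eta^1}f^1(\eta_s)\in[\rho_1-\tfrac{\nu_1A_{11}}4,\rho_1)$, so $m_t\lt D^1_r\eta^1_t\lt c_1e^{\rho_1(T-t)}$. For $k\gt2$ the terminal value is $0$ and the same substitution gives $D^k_r\eta^1_t=\E\big[\int_t^Te^{\int_t^ub_s\,ds}\big(-\pl_{\eta^k}f^1(\eta_u)\big)D^k_r\eta^k_u\,du\,\big|\,\mc F_t\big]$, which is $\gt0$ since every factor is positive; bounding the three factors by $e^{\rho_1(T-t)}$ (as $b_s<\rho_1$), by $\tfrac{\nu_1|A_{1k}|}4$ (as $A_{1k}<0$ and $\psi\lt\tfrac14$) and by $c_ke^{\rho_k(T-t)}$, then integrating in $u$ over $[t,T]$, yields $D^k_r\eta^1_t\lt e^{\rho_1(T-t)}\kappa^k_t$. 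Collecting everything, a.s. $D^1_r\eta^1_t\gt m_t$, $D^k_r\eta^1_t\gt0$ for $k\gt2$, and $|D_r\eta^1_t|^2=\sum_{k=1}^n(D^k_r\eta^1_t)^2\lt e^{2\rho_1(T-t)}\big(c_1^2+\sum_{k=2}^n(\kappa^k_t)^2\big)=M_t^2$. Feeding the constant-in-$r$ bounds $m_{r,t}=(m_t,0,\dots,0)$ and $M_{r,t}=M_t$ into Corollary \ref{cor-main}, so that $\la(t)=\int_0^tm_t^2\,dr=tm_t^2$ and $\La(t)=\int_0^tM_t^2\,dr=tM_t^2$, produces the asserted density bounds. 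The argument is thus an assembly of tools already at hand; the one technical point, exactly as in Section~3, is the integrability needed to turn the local martingales arising in the exponential substitutions into genuine martingales, which is guaranteed by the boundedness of $\pl_\eta f$ together with $\xi\in L_4(\Om)\cap\D^{1,2}$ and the usual $L^2$-estimates for $(D^k_r\eta_t,D^k_rz_t)$.
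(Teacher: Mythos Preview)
Your proof is correct and follows essentially the same route as the paper: both exploit the ``arrow'' structure of $A$ to decouple the linear Malliavin BSDEs for rows $i\gt 2$, solve these via the exponential substitution of Theorem~\ref{t1111} (formula \rf{u1111}) to get $D^k_r\eta^i_t=0$ for $k\ne i$, $i\ne 1$ and $0<D^k_r\eta^k_t\lt c_ke^{\rho_k(T-t)}$, then feed this back into the first row to obtain the bounds on $D^k_r\eta^1_t$ and apply Corollary~\ref{cor-main}. One small verbal slip: the reason rows $2,\dots,n$ of $\{-\pl_{\eta^j}f^i\}$ are diagonal is that \emph{gene~1 does not regulate genes $2,\dots,n$} (giving $A_{i1}=0$ for $i\gt2$), not that ``genes $2,\dots,n$ neither regulate gene~1'' as you wrote --- the latter is false, since $A_{1j}<0$. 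Your parenthetical conclusion $A_{i1}=0$ is nevertheless the correct fact, and the argument is unaffected.
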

\begin{proof}

As it was discussed in Subsection \ref{sec1111}, the BSDE for $D^k_r \eta^1_t$ takes the form
\mmm{
\lb{x1e1}
D^k_r \eta^1_t = \dl_{k1}c_1 - \int_t^T \nu_1 \psi(\Te_1) \sum_{j\ne 1} A_{1j} D^k_r \eta^j_s \, ds
\\ + \int_t^T(\rho_1 - \nu_1 \psi(\Te_1) A_{11}) D^k_r \eta^1_s\, ds  + \int_t^T   D^k_r z^1_s dB_s,
}
where $\psi$ is defined in \rf{psi1111}. 
We claim that if $k = 1$, the second term on the right-hand side of \rf{x1e1} equals zero.
Indeed, for $i\ne 1$,
\aa{
D^k_r \eta^i_t  = \dl_{ki}c_i + 
\int_t^T(\rho_i - \nu_i \psi(\Te_i) A_{ik}) D^k_r \eta^i_s\, ds  + \int_t^T   D^k_r z^i_s dB_s.
}
By the equation in \rf{u1111} (see also Corollary \ref{cor-qi1111}),
\aaa{
\lb{kk1111}
D^k_r \eta^i_t  = \E\big[ \dl_{ki}c_i e^{\int_t^T (\rho_i - \nu_i \psi(\Te_i) A_{ik}) ds}\big | \mc F_t\big]
}
which, in particular, implies that $D^k_r \eta^i_t = 0$ if $k\ne i$ and $i\ne 1$. Therefore,
for $k=1$, the second term on the right-hand side of \rf{x1e1} equals zero. This implies that
\aa{
D^1_r \eta^1_t =  \E\big[ c_1 e^{\int_t^T (\rho_1 - \nu_1 \psi(\Te_1) A_{11}) ds}\big | \mc F_t\big].
}
Hence, we have the estimate
\aaa{
\lb{e78}
c_1 e^{(\rho_1-\frac{\nu_1 A_{11}}4)(T-t)} \lt D^1_r \eta^1_t  \lt c_1 e^{\rho_1(T-t)}.
}
Next, if $k\ne 1$, by \rf{u1111} and \rf{x1e1},
\aa{
D^k_r \eta^1_t =  \E\Big[  
 \int_t^T   e^{\int_t^s  (\rho_1 - \nu_1 \psi(\Te_1) A_{11}) dr} 
\nu_1 \psi(\Te_1) |A_{1k}| D^k_r \eta^k_s ds \,\big|\,\mc F_t \Big].
}
Since, by \rf{kk1111}, $0< D^k_r \eta^k_t \lt c_k e^{\rho_k(T-t)}$, we obtain the estimate
\aa{
0< D^k_r \eta^1_t \lt 
 \frac{\nu_1 c_k\, |A_{1k}|}4(T-t) \, e^{(\rho_1+\rho_k)(T-t)}.
}
Together with  inequalities \rf{e78} and Corollary \ref{cor-main}, the above estimate implies the statement of the theorem.
\end{proof}
\subsection{Numerical results}
\lb{numsec}
In this subsection, we compute prediction intervals, the probabilities $\PP\{\eta^i_t \lt 0\}$
(which turn out to be neglidibly small), and obtain
numerical bounds on the density $\rho_{\eta^1_t}$ by Theorem \ref{thm5}.

\subsubsection{Computation of the expectations $\E \eta^i_t $ and $\E|\eta^i_t - \E \eta^i_t|$}
The expectations $\E \eta^i_t $ and $\E|\eta^i_t - \E\eta^i_t|$
are computed by the formulas
\eqn{
\lb{exps}
&\E \eta^i_t  = \frac1{(2\pi t)^{\frac{n}2}}\int_{\Rnu^n} \te^i(t,x) \, e^{-\frac{|x|^2}{2t}} dx,\\
&\E\big|\eta^i_t  - \E \eta^i_t \big|
=\frac1{(2\pi t)^{\frac{n}2}}\int_{\Rnu^n} |\te^i(t,x) - \E \eta^i_t |\, e^{-\frac{|x|^2}{2t}} dx,
}
where $\te^i(t,x)$ is the $i$-th component of the solution $\te(t,x)$ to the final value problem \rf{pde-model} in which the $i$-th component of the final condition $h(x)$
takes the form $h^i(x) = c_i x_i + b_i$. Above, $c_i$ and $b_i$ are positive constants 
obtained in such a way that 
the distribution of $h(B_T)$ coincides with the distribution of $\eta_T$, where the latter is generated
by Gillespie's method. 
We aim to find a numerical solution to problem \rf{pde-model} (at time $t$) 
only in the cube $Q_a = \{x\in\Rnu^n, |x_i|\lt a\}$  chosen in such a way that in the integrals \rf{exps}, the integration
over $\Rnu^n$ can be replaced with the integration over $Q_a$ while keeping the  computational  error small.
Thus, we have to decide how we choose $a$ and how we obtain a numerical solution to problem \rf{pde-model} in $Q_a$.

We start by evaluating the parameter $a$ based on the following two conditions: 
 1) we make sure that the random variable $B_T$, when simulated, takes values 
in $Q_a$; 2) we evaluate the error  in computing expectations \rf{exps} which comes from substituting
the actual area of integration by $Q_a$. The probability that $B_T$ is in $Q_a$ can be easily computed. On the other hand, the parameter $a$ 
can be evaluated by visualizing the simulation of $B_T$ because in practice there are no values of $B_T$ outside of some compact region.
Thus, we have to confirm that the error in computing the expectations \rf{exps} is small. 
Note that if $\tet_t^{\tau,x}$ is the solution to the BSDE \rf{bsde2}  
with the final condition $h(x+B_T-B_\tau)$, it holds that $\tet^{\tau,x}_\tau = \te(\tau,x)$ 
for all $\tau\in [0,T]$. We can evaluate $\E[\tet^{\tau,x,i}_t]$ from the 
associated BSDE.
Indeed, it follows that
\aa{
\E[\tet^{\tau,x,i}_t] = e^{\rho_i(T-t)}\E[h^i(x+B_T-B_\tau)] - \int_t^T e^{\rho_i(s-t)} \E\Big[\frac{\nu_i}{1+e^{-\Te_i}}\Big]\, ds,
}
where $i$ in the upper index stands for the $i$-th component. Therefore,
\aa{
\te^i(\tau,x) = e^{\rho_i(T-\tau)}(c_i x_i + b_i) - \int_\tau^T e^{\rho_i(s-\tau)} \E\Big[\frac{\nu_i}{1+e^{-\Te_i}}\Big]\, ds.
}
This implies that
\aa{
|\te^i(t,x)| \lt e^{\rho_i(T-t)}\big(c_i|x_i|+b_i + \nu_i(T-t)\big).
}
By the first equation in \rf{exps}, we obtain  a  bound  on ${\rm Err} \, \E \eta^i_t$ by 
computing the integral of the right-hand side of the above estimate
over ${\Rnu^n \dd Q_a}$:
\mmm{
\lb{abexp}
{\rm Err} \, \E \eta^i_t \lt  \frac1{(2\pi t)^{\frac{n}2}}\int_{\Rnu^n \dd Q_a}|\te^i(t,x)| \, e^{-\frac{|x|^2}{2t}} dx\\
\lt  e^{\rho_i(T-t)} J(t,a)\big(ac_i + c_i(n-1) \sqrt{2t \pi^{-1}} + b_in + \nu_i n(T-t)\big),
}
where 
$J(t,a) = \sqrt{\frac{2t}{\pi a^2}} \, e^{-\frac{a^2}{2t}}$. Above, we used the estimate
$\int_z^{+\infty} e^{-x^2} dx  < \frac1{2z}\, e^{-z^2}$.
Finally, we have
${\rm Err} \, \E\big|\eta^i_t  - \E[\eta^i_t ]\big| \lt 2\, {\rm Err \,}  \, \E |\eta^i_t|$, 
where $\E|\eta^i_t|$ is also estimated by the right-hand side of \rf{abexp}.

To obtain a numerical solution to problem \rf{pde-model} in $Q_a$, we solve the PDE \rf{pde-model} in a larger region, namely in
the cube $Q_{a+N}$, while using the boundary condition $\frac{\pl}{\pl {\mb n}} \te(t,x) = 0$, where
$\mb n$ is the unit normal vector to the boundary of this cube. This problem is equivalent to the final
 value problem of  the form \rf{pde-model} with the final condition
\aa{
h^i_N(x) =
\begin{cases}
c_ix_i + b_i \; \text{if} \; |x_i|\lt a+N,\\
c_i(a+N)  + b_i \; \text{if} \; x_i > a+ N,\\
c_i(-a-N) + b_i  \; \text{if} \; x_i < -a- N.
\end{cases}
}
We choose $N$ in such a way that the solutions
$\te(t,x)$ and $\te_N(t,x)$ to the final value problem \rf{pde-model} with the final conditions
$h(x)$ and $h_N(x)$ are close enough within $Q_a$. To evaluate this difference, 
we again use the associated BSDE.
By the standard arguments,
 \aa{
 |\te(t,x) - \te_N(t,x)| \lt e^{M(T-\tau)}\Big( \E|h(x+B_T - B_\tau) - h_N(x+B_T - B_\tau)|^2\Big)^\frac12,
 }
where $M$ is a bound on 
 $\nab f$, which can be computed using expression \rf{sdrate}. For this bound, one can take, for example, $M = \sqrt{\sum_i 2m_i^2}$,
 where  $m_i = \max\{\rho_i, \frac{\nu_i |A_i|}4\}$ and $|A_i| = \sqrt{\sum_{j=1}^n A_{ij}^2}$.
Using the explicit form of $h$ and $h_N$, we compute the right-hand side of the above inequality, which gives
\aa{
\sup_{x\in Q_a} |\te(\tau,x) - \te_N(\tau,x)| 
\lt \frac{|c|  (T-\tau)^\frac34}{N^\frac12} \, e^{M(T-\tau)-\frac{N^2}{4(T-\tau)}},
}
where $c=(c_1, \ldots, c_n)$.
Thus, we  consider $\te_N(t,x)$ as a numerical solution to problem \rf{pde-model} in $Q_a$.
To obtain $\te_N$, problem \rf{pde-model} was transformed to an initial problem by the 
time change $t \leftrightarrow T-t$.
The resulting system of PDEs  with the initial condition $h_N(x)$ and the
boundary condition $\frac{\pl}{\pl{\mb  n}}\, \te = 0$ was solved by the fractional step method \cite{Yan}
employing the Crank-Nicolson scheme in each spatial direction.

\subsubsection{Computation of prediction intervals}
\lb{pr1111}
We computed prediction intervals using the formulas
obtained in Corollary \ref{cor-pred-1111}.
We considered a network of three fully interacting genes with the final data 
$h^i(B_T) = c_i B^i_T + b_i$, $i=1,2,3$ and
the following set of parameters:
 $T=6$, $t=3$, $c_1 = 5$, $c_2 = 0.5$, $c_3 = 0.3$, $b_1 = 150$, $b_2 = 70$, $b_3 = 80$,
 $\nu_1 = 0.5$, $\nu_2 = 0.75$, $\nu_3 = 1.0$, $\rho_1 = 0.2$, $\rho_2 = 0.5$,
 $\rho_3 = 0.6$, and the matrix $A= \{A_{ij}\}$ given by
\aa{
A =
 \begin{pmatrix}
 2.5  &-0.2  &-0.25\\
 -0.03  &3.0  &-0.3\\
  -0.5  &-0.1   &2.0
 \end{pmatrix}.
 }
We obtained 
\aa{
&\PP\{\eta^1_t \in (129.8, 409.5)\} \gt 0.75;  &&\PP\{\eta^1_t \in (83.4, 455.9)\} \gt 0.95;\\
&\PP\{\eta^2_t \in (157.7, 437.4)\} \gt 0.75;   &&\PP\{\eta^2_t \in (111.3, 483.8)\} \gt 0.95;\\
&\PP\{\eta^3_t \in (311.7, 591.4)\} \gt 0.75;   &&\PP\{\eta^3_t \in  (265.3, 637.8)\} \gt 0.95.
}

\subsubsection{Positivity of $\eta^i_t$}
\lb{positivity1111}
In the situation described in the previous subsection, in particular, using the same set
of parameters, we obtained the estimates on probabilities of the events that the components of $\eta_t$
are non-positive:
\aa{
\PP(\eta^1_t \lt 0) \lt  4\cdot 10^{-4}; \quad \PP(\eta^2_t \lt 0) \lt  8\cdot 10^{-5}; 
\quad \PP(\eta^3_t \lt 0) \lt  4\cdot 10^{-10}.
}
This confirms that each $\eta^i_t$ is positive.
In \cite{gene}, 
the positivity of $\eta^i_t$ is a result of agreeing of the BSDE method with Gillespie's  SSA and the fact that the final condition $\eta^i_T$, provided by SSA,
was modeled as $c_i B^i_T + b_i$ 
by fitting the parameters $c_i$ and $b_i$. Here, for the same purpose, we use the result of
Subsection \ref{method1111}. 
\subsubsection{Numerical bounds on the density in the simplified model}
\lb{plots1111}
For the model described in Subsection \ref{precise1111}, we
performed two simulations with different sets of parameters; 
the number of genes in both simulations
was taken three.
The first simulation was performed with the following parameters:
$\rho_1 = \rho_2 =\rho_3 = 1$; $\nu_1 = 0.4$, $\nu_2 = 0.1$, $\nu_3 = 0.3$;
$c_1 = 4.89$, $c_2 = 0.47$, $c_3 = 0.51$, $b_1 = 75.98$, $b_2 = 7.84$, $b_3 = 8.85$;
$A_{11} = 0.1$, $A_{22}=0.04$, $A_{33} = 0.6$, $A_{12} = A_{13} = -2$, $A_{ij} = 0$ if $i\ne j$ and $i\ne 1$;
$T = 4$ and $t=2$. Here, the role of the second and the third genes in  repressing the first gene
is not so significant.  The self-degradation of the first gene plays a bigger role compared to the other simulation.
The second simulation was performed with the parameters:
$\rho_1 = 0.05$, $\rho_2 =\rho_3 = 10^{-4}$; $\nu_1 = 5$, $\nu_2 = 1$, $\nu_3 = 1$;
$c_1 = 1$, $c_2 = 10^{-2}$, $c_3 = 10^{-2}$, $b_1 = 55.32$, $b_2 = 712.34$, $b_3 = 834.02$;
$A_{11} = 10^{-5}$, $A_{22}=10^{-2}$, $A_{33} = 0.1$, $A_{12} = -4$, $A_{13} = -3.5$, $A_{ij} = 0$ if $i\ne j$ and $i\ne 1$;
$T = 18$ and $t=9$. Here, the second and the third genes play a bigger role in repressing the first
gene. This happens because $\Te_1 = A_{11}\eta^1 + A_{12} \eta^2 + A_{13}\eta^3$ is a big negative number 
  reducing the synthesis rate of the first protein  according to formula \rf{sdrate}, and thus, allowing it to degrade.
  
  The density bound curves were computed by Theorem \ref{thm5}; specifically,
  by computing the expressions for $m_t$ and $M_t$, and the expectations $\E\eta^1_t$ and
$\E|\eta^1_t - \E\eta^1_t|$. 
 \vspace{-2.2cm}
 \begin{figure}[h]
 \begin{minipage}{.5\textwidth}
\begin{flushleft}
\begin{overpic}[width=.95\linewidth]{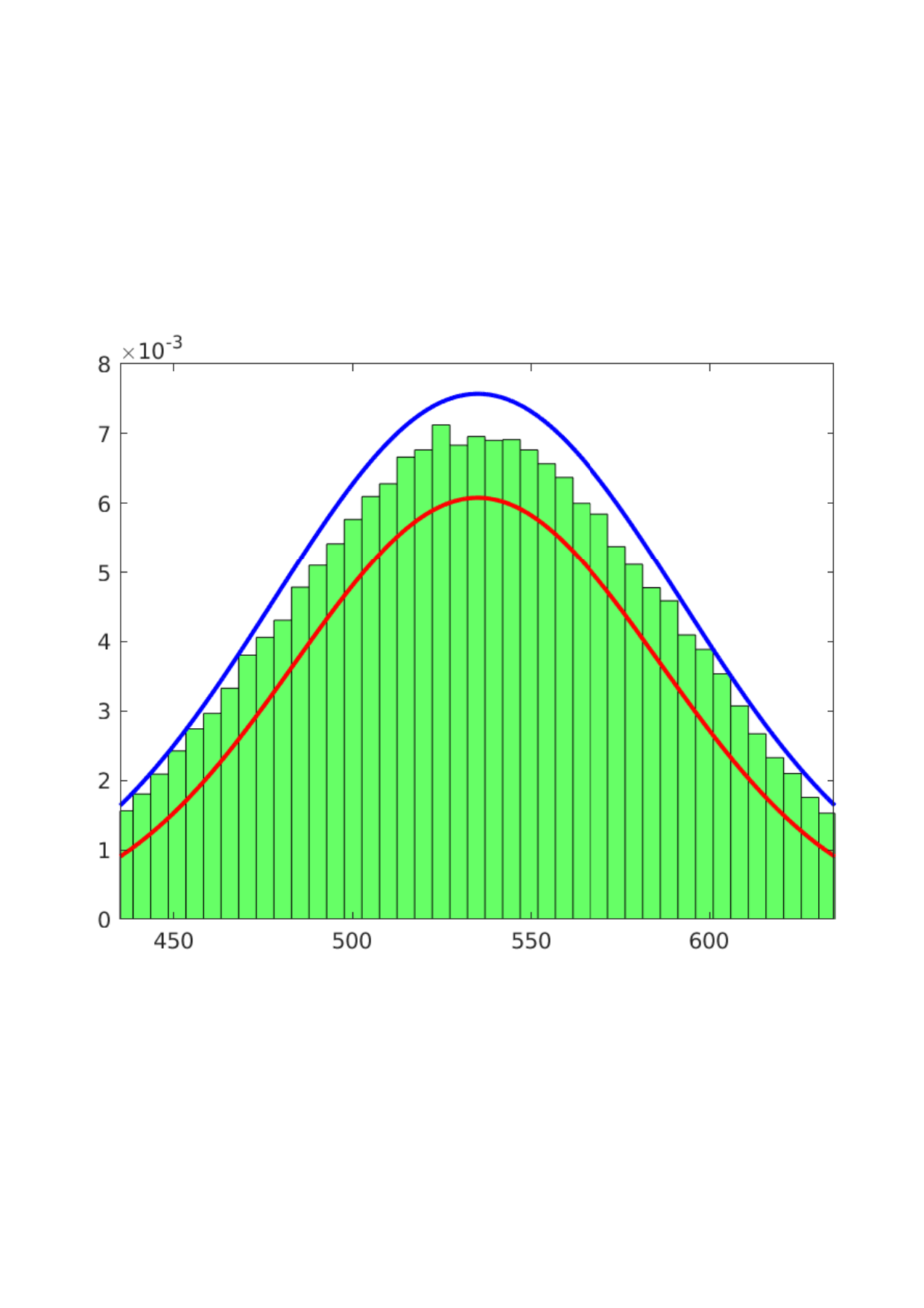}
\put(18,73.5){\tiny{1st simulation, $t=2$, gene 1}}
\put(23,25){\tiny{Number of proteins}}
\end{overpic}
\caption*{}
\end{flushleft}
\end{minipage}%
\begin{minipage}{.5\textwidth}
\begin{flushright}
\begin{overpic}[width=.95\linewidth]{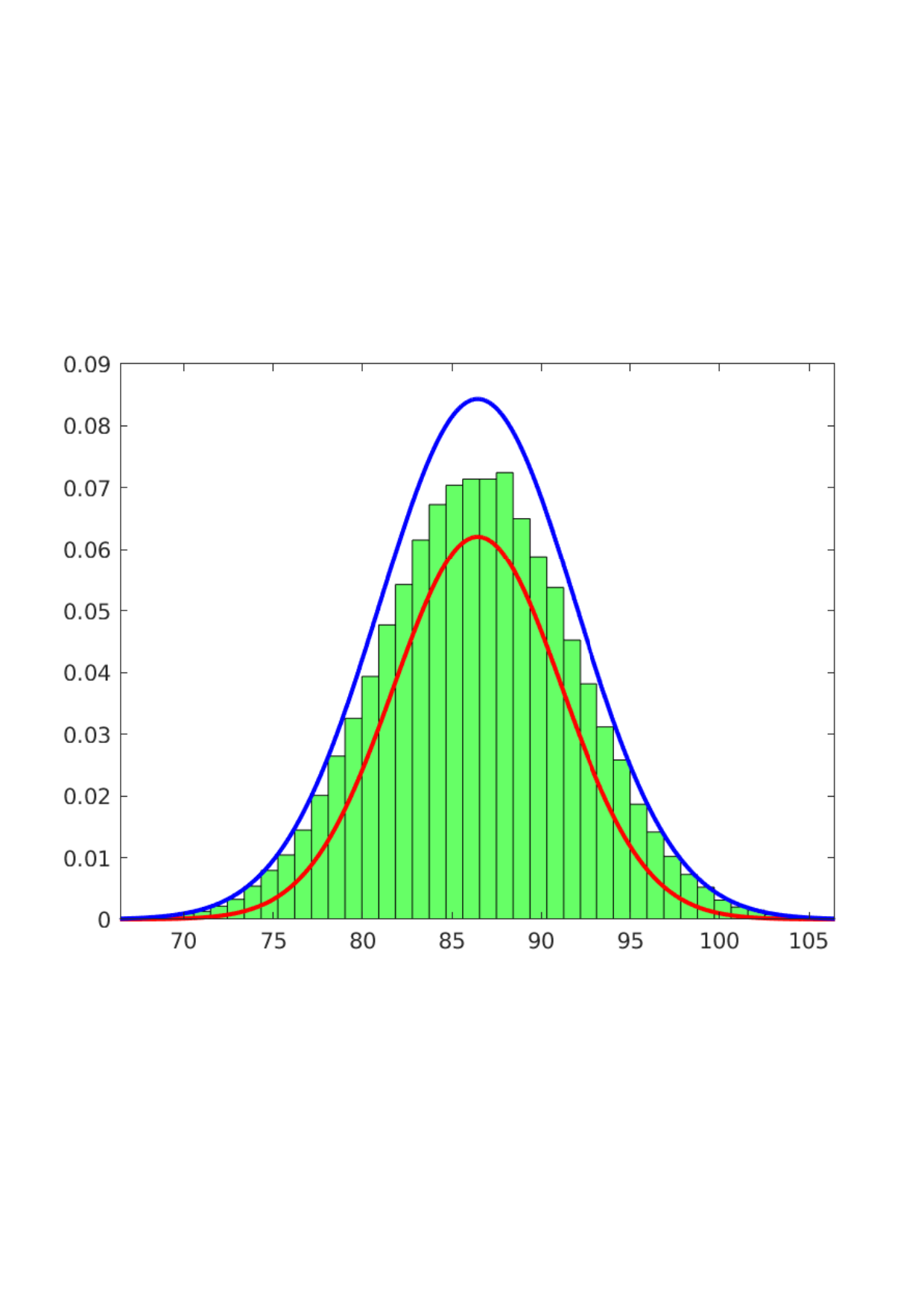}
\put(18,73.5){\tiny{2nd simulation, $t=9$, gene 1}}
\put(23,25){\tiny{Number of proteins}}
\end{overpic}
\caption*{}
\end{flushright}
\end{minipage}%
\end{figure}

Furthermore, we performed a simulation using Gillespie's SSA.
First of all, we did it to obtain the parameters 
$c_i$ and $b_i$, $i=1,2,3$ (in such a way that the distributions of $\eta^i_T$ and $c_i B^i_T + b_i$ coincide), but also, to demonstrate
that our density estimates agree with the data generated by SSA.
To verify the latter, we plotted a histogram (in green, see the figures) obtained by SSA and observed that it fits quite well between the curves
computed by  using Theorem \ref{thm5}.
Remark that obtaining the  aforementioned density bounds is fully based on the BSDE \rf{bsde2} which is in the heart of the BSDE method
introduced in \cite{gene}. By this, the validity of this method is confirmed once again.

\section*{Acknowledgments}
The authors thank the anonymous referee for valuable suggestions towards 
improving the quality of this work.
R.C. acknowledges support from 
 AL ARISE (Ref. LA/P/0112/2020), SYSTEC Base (Ref. UIDB/00147/2020)UIDB/00147/2020), and Programmatic (Ref. UIDP/00147/2020) funding by the Foundation for Science and Technology (FCT), Portugal.
E.S. acknowledges partial support from 
Universidade Federal da Para\'iba through the grant
PROPESQ/PRPG/UFPB/PIA13631-2020.


\begin{thebibliography}{99999}
\bibitem{aboura}
O. Aboura, S. Bourguin,
Density Estimates for Solutions to One Dimensional Backward SDE's. Potential Anal (2013) 38:573--587.
%
\bibitem{antonelli} F. Antonelli, A. Kohatsu-Higa, Densities of one-dimensional backward SDEs. Potential Anal (2005) 22(3):263--287.
\bibitem{cox} D. B. T. Cox, R. J. Platt, F. Zhang, Therapeutic genome editing: prospects
and challenges, Nat Med. 21(2), (2015) pp. 121--131. 
\bibitem{Ci}
J. Cvitanic and J. Ma, Hedging options for a large investor and forward-backward SDEs,
Ann. Appl. Probab., (1996) 6, pp. 370--398.
\bibitem{DelbaenTang}
F. Delbaen, S. Tang,  Harmonic analysis of stochastic equations and backward stochastic differential equations.
Probab. Theory Relat. Fields  (2010) 146, 291.
\bibitem{ng} 
N.T. Dung, N. Privault, G.L. Torrisi, Gaussian estimates for the solutions of some one-dimensional stochastic equations. Potential Anal (2015) 43, pp 289--311.
\bibitem{fanwu}
X. Fan, J-L. Wu. Density estimates for the solutions of backward
stochastic differential equations driven by Gaussian processes. 
Potential Anal (2021) 54, pp. 483--501. 
\bibitem{HarterRichou}
J. Harter, A. Richou, A stability approach for solving multidimensional quadratic BSDEs
Electron. J. Probab. (2019) 24, pp. 1--51
\bibitem{ghosh} R. Ghosh,  S.J. Tabrizi, Gene suppression approaches to neurodegeneration. Alz Res Therapy 9,  (2017) pp. 82--94.
\bibitem{Gil}
D. Gillespie, Exact stochastic simulation of coupled chemical reactions,
J. Phys. Chem. 81, (1977) pp. 2340--2361.
\bibitem{jeanblanc} 
M. Jeanblanc, M. Yor, M. Chesney, Mathematical Methods for Financial Markets,
Springer, 2009.
\bibitem{lee}
W.-R. Lee, J.-Y. Jang, J.-S. Kim, M.-H. Kwon, and Y.-S. Kim,
Gene silencing by cell-penetrating, sequence-selective and nucleic-acid hydrolyzing antibodies,
Nucleic Acids Research, Vol. 38, No. 5, (2010) pp. 1596--1609.
\bibitem{Karo}
 N. El Karoui, S. Peng, and M. C. Quenez, Backward stochastic differential equations in
finance, Math. Finance, (1997) :7  1--71.
\bibitem{li} H. Li, Y. Yang, W. Hong, et al. 
Applications of genome editing technology in the targeted therapy of human diseases: mechanisms, advances and prospects. Sig Transduct Target Ther 5, 1 (2020).
\bibitem{TM}
T. Mastrolia, D. Possama\"i, A. R\'eveillac,
{Density Analysis for FBSDEs}, The Annals of Probability
(2016), 44(4), pp. 2817--2857.
\bibitem{TM1}
T. Mastrolia,
{Density analysis of non-Markovian BSDEs and applications to biology and finance}, Stoch Proc Appl,
128, 3, (2018) pp. 897--938.
\bibitem{miller} C. M. Miller, E. N. Harris, Antisense Oligonucleotides: Treatment Strategies and Cellular Internalization, RNA Dis. 2016; 3(4): e1393
\bibitem{NV}
I. Nourdin, F. Viens, Density formula and concentration inequalities with Malliavin calculus. 
Electron. J. Probab. 14, 2287--2309 (2009)
\bibitem{nualart}
{D. Nualart}, The Malliavin calculus and related topics, Springer-Verlag Berlin Heidelberg 2006.
\bibitem{privault} N. Privault. Stochastic Analysis in Discrete and Continuous Settings, volume 1982 of Lecture Notes in Mathematics, p. 309. Springer, Berlin (2009)
\bibitem{OS} C. Olivera and E. Shamarova, Gaussian density estimates for solutions of fully coupled forward-backward SDEs,
Mathematische Nachrichten, 294, pp. 1230--1242, (2020)
\bibitem{rinaldi}
C. Rinaldi,   M. J. A. Wood, Antisense oligonucleotides: the next frontier for treatment of neurological disorders. Nature Reviews Neurology, 14(1),   pp. 9--21, (2017).
\bibitem{gene} E. Shamarova, R. Chertovskih, A. F. Ramos, and P. Aguiar,   
Backward-stochastic-differential-equation approach to modeling of gene expression, 
Physical Review E, v. 95, p. 032418 (2017).
\bibitem{Wazewski} T. Wazewski, Systemes des equations et des inegalites differentielled aux deuxieme membres et leurs applications, Annales Polonici Mathematici, vol. 23, pp. 112--166, 1950.
\bibitem{Yong} J. Yong and X. Y. Zhou, {Stochastic Controls. Hamiltonian Systems and HJB Equations},
Springer, New York, 1999.
\bibitem{Yan} Yanenko N.N. {\em The Method of Fractional Steps. The Solution of Problems of Mathematical Physics in Several Variables}, Springer, 1971.
\end{thebibliography}
\end{document}